\theoremstyle{plain}
\newtheorem{theorem}{Theorem}[section]
\newtheorem{lemma}[theorem]{Lemma}
\newtheorem{corollary}[theorem]{Corollary}
\newtheorem{proposition}[theorem]{Proposition}
\theoremstyle{definition}
\newtheorem{definition}[theorem]{Definition}
\theoremstyle{remark}
\newtheorem{remark}[theorem]{Remark}
\newtheorem{example}[theorem]{Example}
\newcommand{\Lie}[1]{\operatorname{\textsl{#1}}}
\newcommand{\Ad}{\textup{Ad}}
\newcommand{\ad}{\textup{ad}}
\newcommand{\Aut}{\mathop{\mbox{\rm Aut}}}
\newcommand{\C}{\mathbb{C}}
\newcommand{\cT}{\mathcal{T}}
\newcommand{\Cyclic}{\mathop{\text{\Large$\mathfrak S$}\vrule width 0pt depth 2pt}}
\newcommand{\cyclic}{\mathop{\text{\large$\mathfrak S$}\vrule width 0pt depth 2pt}}
\newcommand{\D}{\mathrm{d}}
\newcommand{\diag}{\mathrm{diag}}
\newcommand{\E}{\mathrm{e}}
\newcommand{\fe}{\mathfrak{e}}
\newcommand{\fg}{\mathfrak{g}}
\newcommand{\fh}{\mathfrak{h}}
\newcommand{\fm}{\mathfrak{m}}
\newcommand{\fp}{\mathfrak{p}}
\newcommand{\fsl}{\mathfrak{sl}}
\newcommand{\fso}{\mathfrak{so}}
\newcommand{\fsp}{\mathfrak{sp}}
\newcommand{\fsu}{\mathfrak{su}}
\newcommand{\fu}{\mathfrak{u}}
\newcommand{\I}{\mathrm{i}}
\newcommand{\na}{\nabla}
\newcommand{\R}{\mathbb{R}}
\newcommand{\Ric}{\mathrm{Ric}}
\newcommand{\SL}{\Lie{SL}}
\newcommand{\SU}{\Lie{SU}}
\newcommand{\T}{\mathcal{T}}
\newcommand{\tr}{\mathrm{tr}\,}
\newcommand{\va}{\varepsilon}
\newcommand{\wn}{\widetilde{\nabla}}
\newcommand{\wR}{\widetilde{R}}
\newcommand{\wT}{\widetilde{T}}
\newcommand{\comp}{\makebox[7pt]{\raisebox{1.5pt}{\tiny $\circ$}}}
\begin{document}

\title{\bf Cyclic metric Lie groups\thanks{The first and third authors have been  supported by the Ministry of Economy and Competitiveness, Spain, under Project MTM2011--22528. The second author has been  supported by D.G.I. (Spain) and FEDER Projects MTM2010--15444 and MTM2013-46961-P. }}

\author{P.~M.~Gadea, J.~C.~Gonz{\'a}lez-D{\'a}vila and J.~A.~Oubi\~na}

\maketitle

\begin{abstract}
Cyclic metric Lie groups are Lie groups equipped with a left-invariant metric which is in some way far from being biinvariant,
in a sense made explicit in terms of Tricerri and Vanhecke's homogeneous structures.
The semisimple and solvable cases are studied.  We extend to the general case, Kowalski-Tricerri's and Bieszk's classifications of connected and simply-connected unimodular 
cyclic metric Lie groups for dimensions less than or equal to five. 
\end{abstract}

{\footnotesize
\noindent {\it Key words:} Cyclic left-invariant metric, cyclic metric Lie group, homogeneous Riemannian structure.

\noindent {\it A.M.S. Subjetc Classification 2010:} 53C30, 22E25, 22E46.
}

\section{Introduction}
\label{intro} 
For any metric Lie group $(G,g)$, let $\fg$ be its Lie algebra, let $\na$ denote
the Levi-Civita connection and consider the left-invariant homogeneous structure $S$ on $(G,g)$ defined by $S_XY = \na_XY$, $X,Y \in \fg$.

We say that $(G,g)$ is {\it cyclic} if the $g$-torsion of the (-)-connection of Cartan-Schou\-ten $\wn:=\nabla - S$ is cyclic (see \cite{Puh}). This means that $S\in \T_1\oplus\T_2$ in
Tricerri and Vanhecke's classification (see \cite{TriVan}) or, equivalently, the corresponding inner product $\langle\cdot,\cdot\rangle$ on $\fg$ satisfies
\[
\Cyclic_{XYZ}\langle [X,Y],Z\rangle =0,
\]
for all $X,Y,Z\in \fg$. So nonabelian cyclic metric Lie groups are in  some way far from being biinvariant Lie groups,
for which the similarly defined structure $S$ belongs to the last basic Tricerri and Vanhecke's class $\T_3$, or equivalently, the torsion of $\wn$ is totally skew-symmetric.

The homogeneous Riemannian spaces $G/H$ that generalize  in a natural way  the cyclic metric Lie groups are the {\it cyclic} homogeneous Riemannian manifolds, that we have studied in~\cite{GadGonOub1}. 

Note that Pf\"affle and Stephan \cite[p.\ 267]{PfaSte} (see also Friedrich \cite{Fri}) proved that the Dirac operator on a Riemannian manifold $(M,g)$ does not contain any information on the Cartan-type component of the torsion of a generic Riemannian connection. In the case of homogeneous Riemannian manifolds, this is the $\cT_2$-component of a homogeneous structure on the manifold. Hence, the connected and simply-connected traceless cyclic metric spin Lie groups have the simplest Dirac operator (that is, like that on a Riemannian symmetric spin space) among connected and simply-connected metric spin Lie groups.    

The paper is organized as follows. In Section \ref{secdos} we give some  preliminaries on homogeneous Riemannian structures.

In Section \ref{sectre} we consider cyclic metric Lie groups and exhibit some differences with biinvariant Lie groups.
After proving (Proposition \ref{pA}) that a connected cyclic metric Lie group is flat if and only if it is abelian,
we study (Proposition \ref{behav}) the different properties of sectional curvatures and scalar curvature according to the group being
solvable, unimodular or nonunimodular.

The semisimple case is studied in Section \ref{seccua}. We prove (Lemma \ref{cslg}, Theorem   \ref{theo34}) that every semisimple cyclic metric Lie group (and, more generally, every nonabelian cyclic metric Lie group) is not compact. Then we show (Corollary \ref{noposi}) that no cyclic metric Lie group has strictly positive sectional curvature and (Theorem   \ref{simple}) that the universal covering group of $SL(2,\R)$ is the only connected, simply-connected simple real cyclic metric Lie group.

In Section \ref{seccin} we consider the solvable case. After obtaining (formula \eqref{solv} and Proposition \ref{pco}) some specific properties useful in the classification for low dimensions, we characterize (Lemma \ref{semidi}) the cyclic metric Lie groups  among semidirect products of cyclic metric Lie groups. Furthermore, we show (Proposition \ref{psplit}) that any solvable cyclic metric Lie group can be expressed as an orthogonal semidirect product. After proving (Proposition \ref{nabnilp}) that nonabelian nilpotent Lie groups do not admit any cyclic left-invariant metric, we construct (Examples~\ref{solvable-gen-1} and~\ref{solvable-gen-2}) two multi-parameter families of solvable cyclic metric Lie groups. 

The classification of connected, simply-connected nonabelian Lie groups $G$ admitting a (nontrivial) traceless cyclic homogeneous structure, that is, for unimodular $G$, was given for dimensions three and four by Kowalski and Tricerri \cite[Theorems  2.1, 3.1]{KowTri} and for (Lie algebras of) dimension five by Bieszk \cite{Bie}. We extend their results, adding the corresponding nonunimodular Lie groups, in Theorems \ref{classi}, \ref{four-nonunim} and \ref{theofive}.

\section{Preliminaries}
\label{secdos}

A  homogeneous structure on a Riemannian manifold $(M,g)$  is a tensor field $S$ of type $(1,2)$ satisfying $\widetilde{\nabla} g = \widetilde{\nabla} R = \widetilde{\nabla} S = 0$, where $\widetilde{\nabla} $ is  (see \cite{TriVan}) the connection $\widetilde{\nabla}  = \nabla -S$, $\nabla$ being the Levi-Civita connection of $g$. The condition $\widetilde{\nabla} g = 0$ is equivalent to $S_{XYZ} = - S_{XZY}$, where $S_{XYZ} = g(S_{X}Y,Z)$.

Ambrose and Singer \cite{AmbSin} gave the following characterization for homogeneous Riemannian manifolds: {\em A connected, simply-connected and complete Riemannian manifold $(M,g)$ is homogeneous if and only if it admits a homogeneous structure} $S$. Furthermore, considering  $M=G/H$ as a reductive homogeneous manifold with reductive decomposition $\fg = \fh \oplus  \fm$,  then $\widetilde{\nabla}  = \nabla -S$ is the canonical connection on $M$ with respect to  the given  reductive   decomposition (see \cite{TriVan}).

Let $(V , \langle\cdot,\cdot\rangle)$ be an $n$-dimensional Euclidean vector space. Tricerri and Vanhecke considered  in \cite{TriVan} the vector space $\mathcal{T}(V)$  of tensors of type $(1,2)$, or equivalently (using the inner product $\langle\cdot,\cdot\rangle$) of type $(0,3)$ satisfying the same algebraic symmetry that a  homogeneous structure, that is,
\[
\mathcal{T}(V ) = \{S\in \otimes^{3}V ^{*}\,:\,  S_{XYZ} = -S_{XZY},\; X,Y,Z\in V \}.
\]
They studied the decomposition of $\mathcal{T}(V )$ into invariant and irreducible components $\mathcal{T}_{i}(V )$, $i=1,2,3$, under the action of the orthogonal group $\Lie{O}(n)$. The tensors $S$ in the class $\mathcal{T}_{1}$ are those for which there exists $\xi\in V$ such that $S_{X}Y = \langle X,Y\rangle\xi - \langle\xi,Y\rangle X$. The tensors $S$ in the class $\mathcal{T}_1\oplus\mathcal{T}_2$ are those satisfying that the cyclic sum
$\mathfrak{S}_{XYZ} S_{XYZ}$ vanishes. Those of type $\mathcal{T}_{2}$ are the ones satisfying moreover $c_{12}(S)(X) = 0$, for all $X\in V$, where $c_{12}(S)(X) = \sum_{i}S_{e_{i}e_{i}X}$ for an arbitrary orthonormal basis $\{e_{i}\}$ of $V$. Those of type $\mathcal{T}_3$ are the ones in $\mathcal{T}(V )$ satisfying $S_{XYZ} = -S_{YXZ}$.

A homogeneous structure $S$ on $(M,g)$ is said to be of type ${\mathscr T}$, ${\mathscr T}(V)$ being one of the eight invariant subspaces of ${\mathcal T}(V),$ if $S_{p}\in {\mathscr T}_{p}(T_{p}M)$ for all $p\in M$.

The torsion $\wT$ of the connection $\wn$ is completely determined by the homogeneous structure $S$ as follows:
\begin{equation}\label{TS}
\wT_{XYZ} = S_{YXZ} - S_{XYZ},
\end{equation}
where $\wT_{XYZ} = g(\wT_{X}Y,Z)$. When it be necessary to refer to the metric, we shall say that $\wT$, as a tensor field of type $(0,3)$, is the $g$-torsion of $\wn$. Hence,
\[
2\Cyclic_{XYZ} S_{XYZ}  = -\Cyclic_{XYZ}\wT_{XYZ}\quad\mbox{\rm and}\quad c_{12}(S)(X) = \tr \;\wT_{X}.
\]
Conversely, since $\wn$ is a metric connection, $S$ can be expressed in terms of $\wT$ as (\cite[p.\ 83]{Kow})
\begin{equation}\label{ST}
2 S_{XYZ} = \wT_{YXZ} + \wT_{YZX} + \wT_{XZY}.
\end{equation}
Then, from (\ref{TS}) and (\ref{ST}) and according with the terminology in \cite[p.\ 222]{Puh}, the $g$-torsion $\wT$ is said to be
\begin{enumerate}
\item[$\bullet$] {\em vectorial} if there exists a vector field $\xi$ such that
\[
\wT_{XYZ} = g(X,Z)g(\xi,Y) - g(Y,Z)g(X,\xi),
\]
or equivalently if $S\in {\mathcal T}_{1};$ \smallskip
\item[$\bullet$] {\em cyclic} if $\cyclic_{XYZ}\wT_{XYZ} = 0,$ or equivalently if $S\in \mathcal{T}_1\oplus\mathcal{T}_2;$ \smallskip
\item[$\bullet$] {\em traceless} if $\tr \;\wT_{X} = 0,$ or equivalently if $S\in \mathcal{T}_2\oplus\mathcal{T}_3;$ \smallskip
\item[$\bullet$] {\em traceless cyclic} if $\wT$ is traceless and cyclic, or equivalently if $S\in{\mathcal T}_{2};$ \smallskip
\item[$\bullet$] {\em totally skew-symmetric} if $\wT_{XYZ} = -\wT_{XZY},$ or equivalently if $S\in {\mathcal T}_{3}.$
\end{enumerate}

For the vectorial case, taking $\varphi(X) = g(\xi,X),$ one gets that $\wT$ is vectorial if and only if there exists a one-form $\varphi$ on $M$ such that
\[
\wT_{X}Y = \varphi(Y)X - \varphi(X)Y.
\]
Note that the properties of being vectorial or traceless do not depend on the metric $g$.  

\section{Cyclic metric Lie groups}
\label{sectre}
\setcounter{equation}{0}

Let $G$ be a connected Lie  group and  let $\fg$ be  its Lie algebra. It is well known (see \cite{Nom}) that there exists a one-to-one correspondence between the set of left-invariant affine connections  on $G$  and the set of bilinear functions $\alpha$ on $\fg\times \fg$ with values in $\fg$.  The correspondence is given by $\alpha(X,Y) = \nabla_{X}Y$, for all $X,Y\in \fg$. For $\alpha =0$ one gets the canonical connection $\wn$, called the
$(-)$-connection of Cartan-Schouten,  i.e., $\wn_{X}Y = 0$, for all $X,Y\in \fg$. Its torsion tensor $\wT$ is given by $\wT_{X}Y = -[X,Y]$ and its curvature tensor $\wR$ is identically zero. Let $\nabla$ be the Levi-Civita connection of a left-invariant metric $g$ on $G$. Then the homogeneous structure $S = \nabla -\wn$ is left-invariant and it is determined by $S_{X}Y = \nabla_{X}Y$, for $X,Y\in \fg$. Using the Koszul formula, or directly from \eqref{ST}, we have 
 \begin{equation}
\label{cc}
 2\langle S_{X}Y,Z\rangle = \langle [X,Y],Z\rangle - \langle [Y,Z],X\rangle + \langle [Z,X],Y\rangle,
 \end{equation}
 for all $X,Y,Z\in \fg$, where $\langle\cdot,\cdot\rangle$ denotes the inner product on $\fg$ corresponding with $g$. Let $U\colon \fg\times \fg\to \fg$ be the symmetric bilinear mapping defined by
\[
2\langle U(X,Y),Z\rangle = \langle[Z,X],Y\rangle + \langle[Z,Y],X\rangle.
\]
Then $S$ is characterized by
\begin{equation}
\label{connection}
S_{X}Y = \frac{1}{2}[X,Y] + U(X,Y).
\end{equation}
Note that $\langle S_{X}Y,Z\rangle + \langle Y,S_{X}Z\rangle = 0$ and $S =0$ if and only if $\fg$ is abelian, or equivalently, because $\wT =0$, if $\wn$ is a flat connection. The curvature tensor $R$ of the Levi-Civita connection is given by
\begin{equation}\label{Rcur}
R(X,Y) = S_{[X,Y]} - [S_{X},S_{Y}],\qquad X,Y\in \fg.
\end{equation}

A metric $g$ on $G$ which is both left- and right-invariant is called biinvariant. This is equivalent to $\langle\cdot,\cdot\rangle$ being
$\Ad(G)$-invariant or also, under our hypothesis of connectedness, to the $g$-torsion $\wT$ being totally skew-symmetric. Hence, $S\in \cT_{3}$, or equivalently, $U=0$. Then, for any biinvariant metric, $S_{X} = \frac{1}{2}\ad_{X}$ and one gets
\[
R(X,Y)Z = \frac{1}{4}[[X,Y],Z],\quad \kappa(X,Y) =\frac{1}{4}\|[X,Y]\|^{2},
\]
where $\kappa$ is the curvature function $\kappa\colon \fg\times \fg\to \fg$, $\kappa(X,Y) =\langle R(X,Y)X,Y\rangle$. So, the sectional curvature $K$  is always nonnegative and there exists a section $\pi = \R\{X,Y\}$ such that $K(\pi) = 0$ if and only if $[X,Y] = 0$. The Ricci tensor $\mathrm{Ric}$ on $\fg$ is given by
\begin{equation}\label{Ricc}
\mathrm{Ric} = -\frac{1}{4}B,
\end{equation}
where $B$ is the Killing form of $G$.

\begin{definition} {\rm A left-invariant metric $g$ on a Lie group $G$ (or a metric Lie group $(G,g)$) is said to be {\it vectorial, cyclic\/} or {\it traceless cyclic\/} if the $g$-torsion of $\wn$ so is.}
 \end{definition}

Then a metric Lie group $(G,g)$ is vectorial if the bracket product on ${\mathfrak g}$ satisfies 
\[
\label{l}
[X,Y] = \varphi(X)Y -\varphi(Y)X,
\]
where $\varphi\in {\mathfrak g}^{*},$ ${\mathfrak g}^{*}$ being the dual space of ${\mathfrak g}$. Because $\tr\;\wT_{X} = -\tr \ad_{X},$ for all $X\in {\mathfrak g}$, it follows that $\wT$ is traceless if and only if the Lie group $G$ is unimodular. Hence, $(G,g)$ is cyclic if and only if
\[ 
\Cyclic_{XYZ}\langle [X,Y],Z\rangle = 0
\] 
and it is traceless cyclic if moreover $G$ is unimodular. 

It has been proved in \cite[Theorem 5.2]{TriVan} that a connected, simply-connec\-ted and complete Riemannian manifold
admits a non-trivial homogeneous structure $S\in\cT_{1}$ if and only it is isometric to the
real hyperbolic space. Moreover, for any given dimension $n$, this homogeneous structure corresponds to the representation of the real hyperbolic space as the solvable
  Lie group
\[        H^n(c)=\left\{\left(
               \begin{array}{cc}
                       \E^{c\,u} I_{n-1}   &   x \\
                        0      &   1
               \end{array}        \right)
                               \in GL(n,\R) :
u\in\R,\;x\in\R^{n-1} \right\}\, ,
 \]
equipped with a suitable left-invariant metric with constant sectional curvature $-c^{2}$. See Example 5.6 for more details. Hence, arguing as in the proof of Theorem 5.2 in \cite{TriVan}, we have the following result.  

\begin{proposition} 
\label{prop32}
Any simply-connected, nonabelian vectorial metric Lie group is isometrically isomorphic to $H^{n}(c),$ for some $c\neq 0$.
\end{proposition}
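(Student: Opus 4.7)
The plan is to extract enough structure from the vectorial bracket relation $[X,Y]=\varphi(X)Y-\varphi(Y)X$ to identify both the Lie algebra and the left-invariant inner product, and then invoke simple-connectedness to promote the resulting Lie algebra isomorphism to an isomorphism of Lie groups.

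First I would exclude the trivial case: since $(G,g)$ is nonabelian, the one-form $\varphi\in\fg^{*}$ must be nonzero. Let $\fn=\ker\varphi$, a codimension-one subspace. From the bracket formula, any two elements of $\fn$ commute, so $\fn$ is abelian; and for $X\in\fg$, $Y\in\fn$ one has $[X,Y]=\varphi(X)Y\in\fn$, so $\fn$ is an ideal. Choose the unit vector $A\in\fg$ orthogonal to $\fn$ with $\varphi(A)>0$ (replace $A$ by $-A$ otherwise) and set $c:=\varphi(A)>0$; note that $c=\|\varphi\|$. Then $\ad_{A}|_{\fn}=c\,\mathrm{id}_{\fn}$, while $\fn$ itself is abelian. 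Thus $\fg=\R A\ltimes\fn$ with the above action, which is precisely the Lie algebra of the matrix group $H^{n}(c)$ described in the excerpt.

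Since $G$ is connected and simply-connected, $G$ is determined up to isomorphism by $\fg$, and so $G$ is isomorphic to $H^{n}(c)$ as a Lie group. To promote this to an isometric isomorphism, fix any orthonormal basis $\{e_{1},\dots,e_{n-1}\}$ of $\fn$; then $\{A,e_{1},\dots,e_{n-1}\}$ is an orthonormal basis of $\fg$ with the standard bracket relations $[A,e_{i}]=c\,e_{i}$, $[e_{i},e_{j}]=0$, which agrees with the canonical orthonormal Lie algebra of $H^{n}(c)$ in its left-invariant metric of constant sectional curvature $-c^{2}$ (cf.\ Example~5.6). The resulting Lie algebra isomorphism is a linear isometry and therefore extends, via the exponential maps of the two simply-connected groups, to an isometric isomorphism of Lie groups.

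As a sanity check, and the one place where a calculation is needed, I would use \eqref{cc} with the above orthonormal basis to compute $S_{A}A=0$, $S_{A}e_{i}=0$, $S_{e_{i}}A=-c\,e_{i}$, $S_{e_{i}}e_{j}=c\,\delta_{ij}A$, verifying that $S_{X}Y=\langle X,Y\rangle\xi-\langle\xi,Y\rangle X$ with $\xi=cA$, and then use \eqref{Rcur} to confirm constant sectional curvature $-c^{2}$, so that the claimed $H^{n}(c)$ is the correct target. The main (mild) obstacle is the last step: arguing that any orthonormal choice on $\fn$ produces a genuine isometric isomorphism rather than merely an abstract isomorphism, which boils down to observing that the structure constants above depend only on $c=\|\varphi\|$ and the orthonormality of the basis.
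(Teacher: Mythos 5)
Your proof is correct, but it takes a genuinely different route from the paper's. The paper does not argue at the Lie algebra level at all: its proof consists of observing that one may repeat the argument of Theorem 5.2 of Tricerri and Vanhecke, which is a Riemannian-geometric one --- a nontrivial homogeneous structure $S\in\T_1$, together with the Ambrose--Singer conditions $\wn R=\wn S=0$, forces constant sectional curvature $-\|\xi\|^{2}$, whence the manifold is isometric to $\RH^{n}$, and this structure corresponds precisely to the solvable description $H^{n}(c)$. You instead read everything off the bracket identity $[X,Y]=\varphi(X)Y-\varphi(Y)X$: the kernel $\mathfrak{n}=\ker\varphi$ is an abelian ideal of codimension one, $\ad_{A}$ acts on $\mathfrak{n}$ as the scalar $c=\varphi(A)=\|\varphi\|$ for the suitably oriented unit normal $A$, so with respect to any orthonormal basis adapted to $\fg=\R A\oplus\mathfrak{n}$ the metric Lie algebra has exactly the structure constants of $H^{n}(c)$ with its standard metric (Example~\ref{solvable-gen-1} with all $\alpha_{\,i}=c$), and simple-connectedness lifts the resulting isometric Lie algebra isomorphism to an isometric isomorphism of groups. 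Your route is more elementary and self-contained: it avoids invoking the classification of constant-curvature spaces and the Tricerri--Vanhecke machinery entirely, and it yields the group isomorphism directly rather than first an isometry to hyperbolic space followed by an identification of the group; indeed your final curvature ``sanity check'' is strictly optional, since matching structure constants on orthonormal bases already pins down the isometric isomorphism. What the paper's citation buys is brevity and generality, as the quoted theorem applies to arbitrary connected, simply-connected complete Riemannian manifolds with a $\T_1$ structure, not only to metric Lie groups. Finally, the ``mild obstacle'' you flag dissolves exactly as you suspect: because $\ad_{A}|_{\mathfrak{n}}=c\,\mathrm{id}$ is scalar, every orthonormal basis of $\mathfrak{n}$ produces the same structure constants, so no diagonalization or normalization choice can affect the outcome, and your $c>0$ certainly supplies the required $c\neq 0$.
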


For cyclic left-invariant metrics, using (\ref{cc}),  one has that
 \begin{equation}\label{Sco}
 S_{XYZ} = \langle \nabla_{X}Y,Z\rangle = \langle [X,Y],Z\rangle.
 \end{equation} 
Then it follows that
\begin{equation}
\label{curvature}
\kappa(X,Y) = -\|[X,Y]\|^{2} + \langle S_{X}Y,S_{Y}X\rangle - \langle S_{X}X,S_{Y}Y\rangle. 
\end{equation} 

\begin{proposition}
If\, $X$ belongs to the center of the Lie algebra $\fg$ of a cyclic metric Lie group  then $\kappa(X,Y) = 0$ for all $Y\in \fg$.
\end{proposition}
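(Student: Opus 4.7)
The plan is to evaluate formula \eqref{curvature} term by term under the hypothesis $X\in Z(\fg)$. Centrality gives $[X,Y]=0$, so the bracket term $\|[X,Y]\|^{2}$ vanishes at once, and it remains only to kill the two inner products $\langle S_{X}Y,S_{Y}X\rangle$ and $\langle S_{X}X,S_{Y}Y\rangle$.

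The pivotal observation, which is the one place where the cyclic hypothesis really enters, is that the cyclic identity
\[
\langle [X,Y],Z\rangle + \langle [Y,Z],X\rangle + \langle [Z,X],Y\rangle = 0
\]
forces $X$ to be orthogonal to the derived subspace $[\fg,\fg]$: when $X$ is central, the first and third summands vanish, leaving $\langle [Y,Z],X\rangle = 0$ for all $Y,Z\in\fg$. Feeding this into the Koszul-type identity \eqref{cc},
\[
2\langle S_{Y}X,Z\rangle = \langle [Y,X],Z\rangle - \langle [X,Z],Y\rangle + \langle [Z,Y],X\rangle,
\]
makes every term on the right vanish — the first two by centrality of $X$, the third by the orthogonality just observed — so $S_{Y}X = 0$. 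Setting $Y=X$ in the same computation (using $[X,X]=0$ in addition) shows $S_{X}X = 0$.

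Consequently both surviving inner products in \eqref{curvature} are zero, and the claim follows. The argument is essentially a short verification with no real obstacle; the conceptual point I would want to highlight is that in a cyclic metric Lie algebra the center automatically sits in the orthogonal complement of $[\fg,\fg]$, a feature reminiscent of biinvariant metrics but obtained here from the weaker cyclic hypothesis.
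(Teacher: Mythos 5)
Your proof is correct and is essentially the paper's: what you re-derive from \eqref{cc} plus the cyclic identity (the vanishing of $S_YX$ and $S_XX$ for central $X$, via orthogonality of the center to $[\fg,\fg]$) is exactly the content of the paper's identity \eqref{Sco}, $S_{XYZ}=\langle[X,Y],Z\rangle$, which it cites to get $S_XY=0$ in one line before concluding from \eqref{curvature}. The only difference is that you unpack the Koszul cancellation in the special case instead of quoting \eqref{Sco}, so the arguments coincide in substance.
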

\begin{proof}
By \eqref{Sco}, $S_XY = 0$, and the result then follows from (\ref{curvature}).
\end{proof}

From (\ref{Rcur}), any left-invariant metric on an abelian Lie group is flat, i.e., its Riemannian sectional curvature vanish. Next,  we prove  that the converse holds for the cyclic left-invariant case.

\begin{proposition}\label{pA}
A connected cyclic metric Lie group is flat if and only if it is abelian.
\end{proposition}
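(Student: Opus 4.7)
The plan is to exploit the cyclic-case simplification of the homogeneous structure supplied by (\ref{Sco}) in order to collapse the sectional curvature formula (\ref{curvature}) to a single tractable term. The trivial direction is immediate: if $\fg$ is abelian then every bracket vanishes, hence $S\equiv 0$ by (\ref{Sco}), and (\ref{Rcur}) yields $R\equiv 0$; equivalently, one can cite the observation already made from (\ref{Rcur}) that any left-invariant metric on an abelian Lie group is flat.

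For the converse, suppose $(G,g)$ is cyclic and flat. Applying (\ref{Sco}) with $Y=X$ gives $S_X X=[X,X]=0$, and with the same identification
\[
\langle S_X Y, S_Y X\rangle = \langle [X,Y],[Y,X]\rangle = -\|[X,Y]\|^{2}.
\]
Substituting these two observations into the sectional curvature formula (\ref{curvature}) collapses it to
\[
\kappa(X,Y) = -\|[X,Y]\|^{2} + \bigl(-\|[X,Y]\|^{2}\bigr) - 0 = -2\,\|[X,Y]\|^{2}.
\]
Since flatness means $\kappa(X,Y)=0$ for every pair $X,Y\in\fg$, the displayed identity forces $\|[X,Y]\|^{2}=0$, hence $[X,Y]=0$ for all $X,Y\in\fg$, i.e., $\fg$ is abelian.

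There is essentially no obstacle once (\ref{Sco}) and (\ref{curvature}) are in hand; the proof is a short algebraic manipulation. The one point worth emphasising is that it is precisely the cyclic condition that causes the two ``correction'' terms in (\ref{curvature}) to cooperate: the symmetric term $\langle S_X X, S_Y Y\rangle$ drops out (via $S_X X=0$) while the antisymmetric term $\langle S_X Y, S_Y X\rangle$ simply doubles the $-\|[X,Y]\|^{2}$ contribution, turning $\kappa(X,Y)$ into a manifestly nonpositive multiple of $\|[X,Y]\|^{2}$ whose vanishing characterises commuting pairs.
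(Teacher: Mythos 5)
Your forward direction is fine, but the converse collapses at the very first step, and the blame is shared with a typo in the paper: equation (\ref{Sco}) cannot be read as the vector identity $S_XY=[X,Y]$ that you use. Indeed, any $S$ coming from a metric connection satisfies $S_{XYZ}=-S_{XZY}$, and combining this with $S_{XYZ}=\langle[X,Y],Z\rangle$ would force every $\ad_X$ to be skew-adjoint, i.e.\ the metric would be biinvariant; it is also incompatible with (\ref{TS}), since $\wT_XY=-[X,Y]$. The correct consequence of cyclicity is obtained by adding the vanishing cyclic sum $\langle[X,Y],Z\rangle+\langle[Y,Z],X\rangle+\langle[Z,X],Y\rangle=0$ to the Koszul formula (\ref{cc}), which gives
\[
\langle S_XY,Z\rangle=-\langle[Y,Z],X\rangle ,
\]
i.e.\ $S_XY=-\ad_Y^{\,t}X$. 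With this identity neither of your simplifications holds: $S_XX=-\ad_X^{\,t}X$ need not vanish, and $\langle S_XY,S_YX\rangle\neq-\|[X,Y]\|^2$ in general. A concrete test is the hyperbolic plane $H^2(a)$, with orthonormal basis $\{e_1,e_2\}$ and $[e_1,e_2]=ae_2$ (vectorial, hence cyclic): here $S_{e_1}e_2=0$ and $S_{e_2}e_2=ae_1\neq 0$, so (\ref{curvature}) gives $\kappa(e_1,e_2)=-a^2$, not your $-2a^2$. Worse, your collapsed formula $\kappa(X,Y)=-2\|[X,Y]\|^2$ would make every cyclic metric Lie group nonpositively curved, flatly contradicting Proposition \ref{behav}(ii) and the cyclic metrics on $\widetilde{\SL(2,\R)}$ of Theorem \ref{classi}, which have the positive principal Ricci curvature $2\lambda_1\lambda_2$. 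So the sectional curvature does not reduce to a sign-definite multiple of $\|[X,Y]\|^2$, and flatness cannot be read off pointwise from (\ref{curvature}) in the way you propose.

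The paper avoids curvature formulas altogether and argues structurally: by Milnor's theorem on flat left-invariant metrics, $\fg$ splits orthogonally as $\fg=\fb\oplus\fu$ with $\fb$ an abelian subalgebra, $\fu$ an abelian ideal, and $\ad_B$ skew-symmetric for every $B\in\fb$. Since then $[\fg,\fg]\subset\fu$ is orthogonal to $\fb$, applying the cyclic condition to a triple $(B,X,Y)$ kills the middle term $\langle[X,Y],B\rangle$ and yields $\langle[B,X],Y\rangle=\langle[B,Y],X\rangle$, so each $\ad_B$ is simultaneously skew-symmetric and selfadjoint, hence zero, and $\fg$ is abelian. If you want to rescue a curvature-based proof, you would have to redo the computation with $\langle S_XY,Z\rangle=-\langle[Y,Z],X\rangle$; the resulting expression for $\kappa$ has competing signs, which is exactly why the structural route through Milnor's theorem is the efficient one here.
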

\begin{proof}
The Lie algebra ${\mathfrak g}$ of a Lie group equipped with a flat left-invariant metric splits as an orthogonal direct sum ${\mathfrak g} = {\mathfrak b} \oplus {\mathfrak u}$ such that ${\mathfrak b}$ is an abelian subalgebra, ${\mathfrak u}$ is an abelian ideal and the linear transformation
 $\ad_{B}$  is skew-symmetric for every $B\in {\mathfrak b}$ (see \cite[Theorem   1.5]{Mil}). But using that the metric is cyclic left-invariant, it follows that $\ad_{B}$ is also selfadjoint. Hence, one has $\ad_{B}(X) = 0,$ for all $X\in {\mathfrak g},$ and so the Lie group is abelian. 
\end{proof}

\begin{proposition}
\label{behav}
Let $G$ be a nonabelian cyclic metric Lie group. We have: \smallskip

\noindent {\rm (i)} If $G$ is solvable then it has strictly negative scalar curvature. \smallskip

\noindent {\rm (ii)}  If $G$ is unimodular then there exist positive sectional curvatures. If moreover it is solvable, it has both positive and negative sectional curvatures. \smallskip

\noindent {\rm (iii)}  If $G$ is not unimodular there exist negative sectional curvatures. \smallskip
\end{proposition}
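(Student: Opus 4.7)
My plan rests on deriving a single clean formula for the scalar curvature of an arbitrary cyclic metric Lie group, from which (i), (iii), and the negative-curvature half of (ii) follow at once; the positive-curvature portion of (ii) requires a separate structural argument.

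Starting from (\ref{curvature}) and the decomposition $S_XY = \tfrac12[X,Y]+U(X,Y)$, I would sum $\kappa(e_i,e_j)$ over an orthonormal basis $\{e_i\}$ of $\fg$ to obtain
\[
\mathrm{scal} = -\tfrac{5}{4}\sum_{i,j}\|[e_i,e_j]\|^2 + \sum_{i,j}\|U(e_i,e_j)\|^2 - \|H\|^2,
\]
where $H\in\fg$ is the vector with $\langle H,X\rangle = \mathrm{tr}\,\ad_X$ (so $H=0$ iff $G$ is unimodular). Expanding the middle sum from the defining formula for $U$ gives $\sum\|U(e_i,e_j)\|^2 = \tfrac12\sum\|[e_i,e_j]\|^2 + \tfrac12\,\mathrm{tr}\,B$, with $B$ the Killing form. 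The cyclic hypothesis now enters through the identity
\[
\mathrm{tr}\,B = \tfrac{1}{2}\sum_{i,j}\|[e_i,e_j]\|^2,
\]
which I would prove by substituting the cyclic relation $c_{ij}^k+c_{jk}^i+c_{ki}^j=0$ (with $c_{ij}^k=\langle[e_i,e_j],e_k\rangle$) into the structure-constants expression $\mathrm{tr}\,B=\sum_{i,j,k} c_{ij}^k c_{ik}^j$ and closing the computation by skew-symmetry and a single relabeling. Combining these steps yields the clean identity
\[
\mathrm{scal}(G) = -\tfrac{1}{2}\sum_{i,j}\|[e_i,e_j]\|^2 - \|H\|^2.
\]

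This single formula disposes of (i), (iii), and the negative half of (ii). Since $G$ is nonabelian, $\sum\|[e_i,e_j]\|^2>0$, so $\mathrm{scal}<0$; this gives (i) at once (and in fact with no use of solvability). As $\mathrm{scal}$ is twice the sum of sectional curvatures over orthonormal 2-planes, at least one $K(\pi)$ is strictly negative, which settles (iii) and the negative part of (ii) in the solvable unimodular case.

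The remaining assertion, and the main obstacle, is to exhibit a strictly positive sectional curvature when $G$ is unimodular, nonabelian and cyclic. My strategy is to analyze the subalgebra $\fk=\{X\in\fg:\ad_X\text{ is skew-symmetric}\}$. Using the cyclic identity together with $\ad_X^*=-\ad_X$ for $X\in\fk$, one obtains $U(X,Y)=\tfrac{3}{2}[X,Y]$ and $U(X,X)=0$, whereupon (\ref{curvature}) collapses to $\kappa(X,Y)=\|[X,Y]\|^2$, strictly positive whenever $[X,Y]\neq 0$. Hence if $\fk$ is not contained in the center $Z(\fg)$, the conclusion follows. The delicate subcase is $\fk\subseteq Z(\fg)$ (which occurs, e.g., for $\fe(1,1)$, where $\fk=0$); here one invokes Proposition \ref{nabnilp} to rule out nilpotency of $\fg$, then locates a 2-dimensional abelian subalgebra $\R X\oplus \R Y$ with $[X,Y]=0$, $U(X,Y)\neq 0$, and $\|U(X,Y)\|^2 > \langle U(X,X),U(Y,Y)\rangle$, yielding $\kappa(X,Y)>0$. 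Making this last step precise in full generality — exploiting solvability versus semisimplicity, and ultimately the structural classification to which the paper is building — is where the technical difficulty lies.
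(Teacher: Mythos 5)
Your scalar-curvature route for (i), (iii) and the negative half of (ii) is correct, and it is genuinely different from the paper's. I checked the key steps: the cross terms $\langle [X,Y],U(X,Y)\rangle$ cancel in $\langle S_XY,S_YX\rangle$, one has $\sum_i U(e_i,e_i)=H$, and substituting $c_{ik}^{j}=c_{ij}^{k}-c_{kj}^{i}$ (from cyclicity) into $\tr B=\sum_{i,j,k}c_{ij}^{k}c_{ik}^{j}$ and relabeling cyclically does give $\sum_{i,j,k}c_{ij}^{k}c_{jk}^{i}=-\tfrac12\sum_{i,j}\|[e_i,e_j]\|^2$, hence $\tr B=\tfrac12\sum_{i,j}\|[e_i,e_j]\|^2$ and $\mathrm{scal}=-\tfrac12\sum_{i,j}\|[e_i,e_j]\|^2-\|H\|^2$; this matches the paper's explicit scalar curvatures for $G^{\,n}(\alpha_1,\dots,\alpha_{\,n-1})$ and for the cyclic metrics on $\widetilde{\SL(2,\R)}$. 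Your formula in fact strengthens (i): strictly negative scalar curvature holds for every nonabelian cyclic group, with no solvability hypothesis, and it yields (iii) and the negative part of (ii) at once. The paper instead obtains (i) and (ii) as black-box consequences of Proposition~\ref{pA} together with Milnor's Theorem~1.6, Theorem~3.1 and Corollary~3.2, and proves (iii) by a direct computation in the unimodular kernel $\fu$, exhibiting the explicit negative plane $K(W,X)=-\|[W,X]\|^{2}$ for $W\perp\fu$ (your argument only gives existence, not an explicit plane, which is a mild loss).

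The genuine gap is the positive half of (ii). Your computation that $\ad_X$ skew-symmetric plus cyclicity forces $U(X,Y)=\tfrac32[X,Y]$, hence $\kappa(X,Y)=\|[X,Y]\|^2$ for $X\in\fk$, is correct, but it only settles the case $\fk\not\subseteq\mathcal{Z}(\fg)$, and you concede that the remaining case is unproved. Worse, the fallback is miscalibrated: in the crucial unimodular example $E(1,1)\cong G^{3}(\alpha,-\alpha)$ one has $\fk=0$, and the positive plane is spanned by the commuting vectors $e_1,e_2$ with $U(e_1,e_2)=0$; positivity comes entirely from $-\langle U(e_1,e_1),U(e_2,e_2)\rangle=\alpha^{2}>0$, so your side condition $U(X,Y)\neq 0$ is not the right invariant, and no general construction of a commuting pair with $\|U(X,Y)\|^{2}>\langle U(X,X),U(Y,Y)\rangle$ is given (Proposition~\ref{nabnilp} alone does not produce one). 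This is precisely the nontrivial content the paper imports from Milnor: a non-flat left-invariant metric on a unimodular group has some strictly positive sectional curvature, and on a solvable unimodular group it has curvatures of both signs; combined with Proposition~\ref{pA} (nonabelian cyclic $\Rightarrow$ non-flat), (ii) follows. To complete your proof, either quote those results of Milnor at this point, or supply the missing construction in the case $\fk\subseteq\mathcal{Z}(\fg)$; as written, the positive-curvature assertion of (ii) remains unproven.
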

\begin{proof} Properties (i) and (ii) follow from Proposition \ref{pA} together with Theorem   1.6, Theorem   3.1 and Corollary   3.2 in \cite{Mil}. If $G$ is not unimodular, its unimodular kernel $\fu$, that is,
\[
\fu = \{X\in \fg \,:\, \tr \ad_{X} = 0\},
\]
is an ideal of codimension one. Let $W$ be a unit vector orthogonal to $\fu$. Then
\[
\nabla_{W}W = 0,\quad \nabla_{W}X = \frac{1}{2}( \ad_{W} -\ad_{W}^{\,t})X,
\]
for all $X\in \fu$ (see \cite{Mil} for the details). Hence, if the inner product $\langle\cdot,\cdot\rangle$ on the Lie algebra ${\mathfrak g}$ of $G$ is cyclic left-invariant, it follows from (\ref{Sco}) that $ \ad_{W\mid {\mathfrak u}}$ is a  selfadjoint operator and $S_{W} =0$. So, from (\ref{curvature}), $K(W,X) = -\|[W,X]\|^{2}$, for all $X\in {\mathfrak u}$. Since there exists $X\in {\mathfrak u}$ such that $ \ad_{W}X \neq 0$,  one gets $K(W,X)< 0$.  This  proves (iii).
\end{proof}

\section{Semisimple cyclic metric Lie groups}
\label{seccua}
\setcounter{equation}{0}

The Killing form $B$ of a semisimple Lie group $G$ provides a biinvariant scalar product making $G$ a biinvariant  pseudo-Riemannian Lie group, which is, by (\ref{Ricc}), an Einstein manifold. When $G$ is moreover compact, $B$ is negative definite and $\langle\cdot,\cdot\rangle = -B(\cdot,\cdot)$ determines a biinvariant (Riemannian) metric.

For a general Lie group $G$ with a biinvariant Riemannian metric determined by an inner product $\langle\cdot,\cdot\rangle$ on its Lie algebra $\fg$, the orthogonal complement of any ideal in $\fg$ is itself an ideal. So $\fg$ can be expressed as an orthogonal direct sum
\[
\fg = {\mathcal Z}(\fg) \oplus \fg_{1}\oplus \dots \oplus \fg_{\ell},
\]
where its center ${\mathcal Z}(\fg)$ is isomorphic to $\R ^{k}$ for some $k$, and $\fg_{1},\dotsc ,\fg_{\ell}$ are compact simple ideals (see \cite{Mil} for details). Then $\langle\cdot,\cdot\rangle$ on $\fg$ is of the form
\[
\langle\cdot,\cdot\rangle = \langle\cdot,\cdot\rangle_{0} + \beta_{1}B_{1} +\dots + \beta_{\ell}B_{\ell},
\]
where $\langle\cdot,\cdot\rangle_{0}$ is the standard inner product on $\R ^{k}$, $B_{i}$, $i = 1,\dotsc,\ell$, is the restriction of the Killing form $B$ to $\fg_{i}\times \fg_{i}$ and $\beta_{i}<0$. Hence, taking into account that ${\mathcal Z}(\fg) = 0$ for  semisimple Lie groups, it follows that a  semisimple Lie group is biinvariant if and only if it is compact.

By contrast, we have in the cyclic left-invariant case the next result. 

\begin{lemma}
\label{cslg}
Every semisimple cyclic metric Lie group is not compact. Moreover, each cyclic
left-invariant metric is determined by an inner product $\langle\cdot,\cdot\rangle$ on its Lie algebra making orthogonal
an arbitrary $B$-orthonormal basis $\{e_{i}\}_{i=1}^{n}$ and $\langle e_{i},e_{i}\rangle = \varepsilon_{i}\lambda_{\,i},$
$i=1,\dotsc ,n,$ where $\varepsilon_{i} = B(e_{i},e_{i})$ and the $\lambda_{\,i}$'s  satisfy
\begin{equation}
\label{cc1}
c_{ij}^{\,k}(\lambda_{\,i} + \lambda_{j} + \lambda_{k}) = 0, \qquad
1\leq i<j<k\leq n,
\end{equation}
$c_{ij}^{\,k}$ being the structure constants given by $[e_{i},e_{j}]
= \sum_{k=1}^{n}c_{ij}^{\,k}e_{k}.$
\end{lemma}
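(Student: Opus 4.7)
My plan is to realise the cyclic inner product through a single endomorphism of $\fg$, diagonalise it simultaneously with $B$, and then read off (\ref{cc1}) from the symmetries of the structure constants.

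Since $G$ is semisimple, $B$ is non-degenerate, so I can define $\phi\colon\fg\to\fg$ by $\langle X,Y\rangle = B(\phi X,Y)$; the symmetry of $\langle\cdot,\cdot\rangle$ makes $\phi$ be $B$-self-adjoint, and the non-degeneracy of $\langle\cdot,\cdot\rangle$ makes it invertible. The crucial remark is that $\phi$ is also self-adjoint for the positive-definite product $\langle\cdot,\cdot\rangle$:
\[
\langle \phi X,Y\rangle = B(\phi^{2}X,Y) = B(X,\phi^{2}Y) = \langle X,\phi Y\rangle.
\]
Hence the spectral theorem furnishes real non-zero eigenvalues $\mu_{i}$ and a $\langle\cdot,\cdot\rangle$-orthonormal eigenbasis $\{f_{i}\}$ of $\phi$. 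Eigenvectors for distinct eigenvalues are automatically $B$-orthogonal from $(\mu_{i}-\mu_{j})B(f_{i},f_{j})=0$, and inside any eigenspace $V_{\mu}$ one has $\langle\cdot,\cdot\rangle|_{V_{\mu}} = \mu\,B|_{V_{\mu}}$, so a $\langle\cdot,\cdot\rangle$-orthonormal basis of $V_{\mu}$ is automatically $B$-orthogonal. Rescaling $e_{i}:=\sqrt{|\mu_{i}|}\,f_{i}$ then yields a $B$-orthonormal basis with $\varepsilon_{i}=B(e_{i},e_{i})=\mathrm{sign}(\mu_{i})$ and $\langle e_{i},e_{i}\rangle=|\mu_{i}|=\varepsilon_{i}\lambda_{i}$ for $\lambda_{i}:=\mu_{i}$.

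In this basis, $B$-invariance forces the quantities $T_{ijk}:=\varepsilon_{k}c_{ij}^{\,k}$ to be totally antisymmetric in $i,j,k$. Substituting $\langle e_{i},e_{j}\rangle=\varepsilon_{i}\lambda_{i}\delta_{ij}$ into $\sum_{\mathrm{cyc}}\langle[e_{i},e_{j}],e_{k}\rangle = 0$ turns the three summands into $T_{ijk}\lambda_{k}$, $T_{jki}\lambda_{i}$, $T_{kij}\lambda_{j}$, and the cyclic invariance of $T$ collapses this to $T_{ijk}(\lambda_{i}+\lambda_{j}+\lambda_{k}) = 0$, that is (\ref{cc1}). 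For the non-compactness assertion I argue by contradiction: if $G$ were compact and semisimple, then $B$ would be negative definite, every $\varepsilon_{i}=-1$, and positive-definiteness of $\langle\cdot,\cdot\rangle$ would force every $\lambda_{i}<0$. Hence $\lambda_{i}+\lambda_{j}+\lambda_{k}<0$ for all $i<j<k$, so (\ref{cc1}) yields $c_{ij}^{\,k}=0$ throughout and $\fg$ is abelian, contradicting semisimplicity.

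The delicate point is the simultaneous diagonalisation: I must notice that $\phi$, a priori only $B$-self-adjoint, happens also to be $\langle\cdot,\cdot\rangle$-self-adjoint, and I must handle repeated eigenvalues of $\phi$ by exploiting that the two forms are proportional on each eigenspace. After this, converting cyclicity to (\ref{cc1}) and extracting non-compactness amount to sign-and-structure-constant bookkeeping.
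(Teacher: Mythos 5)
Your proposal is correct and follows essentially the same route as the paper's proof: write $\langle\cdot,\cdot\rangle = B(Q\,\cdot,\cdot)$ for a $B$-self-adjoint operator $Q$, diagonalize $Q$ in a $B$-orthonormal basis, use the invariance of $B$ to make $T_{ijk}=\varepsilon_k c_{ij}^{\,k}$ totally antisymmetric, and read off \eqref{cc1}; your explicit check that $Q$ is also self-adjoint for the positive-definite product $\langle\cdot,\cdot\rangle$ (hence diagonalizable, with eigenspaces on which the two forms are proportional) nicely makes rigorous a step the paper passes over in one line. Your non-compactness endgame (negative-definite $B$ forces all $\lambda_{\,i}<0$, hence by \eqref{cc1} and total antisymmetry all $c_{ij}^{\,k}=0$, so $\fg$ would be abelian) is merely the contrapositive of the paper's argument, which uses $\mathcal{Z}(\fg)=0$ to get $\lambda_{\,i}=-(\lambda_{j}+\lambda_{k})$ and conclude that the $\lambda_{\,i}$'s cannot all have the same sign.
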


\begin{proof} Let $G$ be a  semisimple Lie group and let $\langle\cdot,\cdot\rangle$ be a cyclic left-invariant inner product on the Lie algebra $\fg$
of $G$. Then $\langle\cdot,\cdot\rangle$ can be expressed in terms of its Killing form $B$ as
\[
\langle X,Y\rangle = B(QX,Y),\qquad X,Y\in \fg,
\]
where $Q$ is some selfadjoint operator on $\fg$. Let $\{e_{1},\dotsc ,e_{n}\}$ be a $B$-or\-tho\-nor\-mal basis of eigenvectors of $Q$ and let $\lambda_{1},\dotsc ,\lambda_{n}$ be the corresponding eigenvalues. Since $\langle\cdot,\cdot\rangle$ is nondegenerate, $\lambda_{\,i}\neq 0$ for all $i\in \{1,\dotsc,n\}$. Then $\{e_{1},\dotsc,e_{n}\}$ is an orthogonal basis with respect to $\langle\cdot,\cdot\rangle$ satisfying $\langle e_{i},e_{i}\rangle = \va_{i}\lambda_{\,i}$, $i = 1,\dotsc ,n$, where $\va_{i} = B(e_{i},e_{i})$; with $\va_{i} = -1$ if $\lambda_{\,i} < 0$ and $\va_{i} = 1$ if $\lambda_{\,i} > 0$. Hence, the structure  constants  $c_{ij}^{\,k}$ are given by $c_{ij}^{\,k} = \va_{k}B([e_{i},e_{j}],e_{k})$. Putting $\bar{c}_{ij}^{\,k} = \va_{k}c_{ij}^{\,k}$, one gets
\begin{equation}\label{constants}
\bar{c}_{ij}^{k} = -\bar{c}_{ji}^{\,k},\quad \bar{c}_{ij}^{\,k} = -\bar{c}_{ik}^{j}.
\end{equation}
The condition of cyclic left-invariance  for $\langle\cdot,\cdot\rangle$ is equivalent to 
\[
\lambda_{k}\bar{c}_{ij}^{\,k} +  \lambda_{j}\bar{c}_{ki}^{j} + \lambda_{\,i}\bar{c}_{jk}^{i} = 0.
\]
Hence, using (\ref{constants}), we have
\[
\bar{c}_{ij}^{\,k}(\lambda_{\,i} + \lambda_{j} + \lambda_{k}) = 0.
\]
As ${\mathcal Z}(\fg) = 0$, it follows that for each $i\in \{1,\dotsc ,n\}$, $\bar{c}_{ij}^{\,k}$ is different from zero for some $j$ and $k$, and so, $\lambda_{\,i} = -(\lambda_{j} + \lambda_{k})$. This  implies that not all the $\lambda_{\,i}$'s have the same sign and so, $B$ is not definite. Hence, $G$ cannot be compact.

Every cyclic inner product $\langle\cdot,\cdot\rangle$ on ${\mathfrak g}$ is then obtained taking as $Q$ the operator given
by $Qe_{i} = \lambda_{\,i}e_{i},$ where $\{e_{i}\}_{i=1}^{n}$ is an arbitrary $B$-orthonormal basis and the $\lambda_{\,i}$'s satisfy
our hypothesis. This proves the result.
\end{proof}

It is well known that every compact Lie algebra ${\mathfrak g}$ is the direct sum ${\mathfrak g} = {\mathcal Z}({\mathfrak g}) \oplus [{\mathfrak g},{\mathfrak g}]$, where the ideal $[{\mathfrak g},{\mathfrak g}]$ is compact and  semisimple. Since the restriction of a cyclic-invariant inner product to $[{\mathfrak g},{\mathfrak g}]$ must be again cyclic-invariant,  from Lemma \ref{cslg} the following result follows. 

\begin{theorem}
\label{theo34}
Every  nonabelian cyclic metric Lie group is  not  compact.
\end{theorem}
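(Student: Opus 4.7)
The plan is to reduce Theorem \ref{theo34} to the semisimple case already handled by Lemma \ref{cslg}, via the standard structure theorem for compact Lie algebras. Argue by contradiction: suppose $G$ is a nonabelian cyclic metric Lie group which is compact, and derive a contradiction from Lemma \ref{cslg}.

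First I would invoke the structural decomposition of the Lie algebra: if $G$ is compact, then so is $\fg$, and it splits as an orthogonal direct sum $\fg = \mathcal{Z}(\fg) \oplus [\fg,\fg]$, where $[\fg,\fg]$ is a compact semisimple ideal. Nonabelianness of $G$ is equivalent to $[\fg,\fg] \neq 0$, so $[\fg,\fg]$ is the Lie algebra of a nontrivial connected compact semisimple Lie subgroup $G'$ of $G$ (with $\fg' = [\fg,\fg]$).

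Next I would verify that the restriction of the cyclic inner product $\langle\cdot,\cdot\rangle$ to $[\fg,\fg]$ is again cyclic. This is immediate from the definition: for any $X,Y,Z \in [\fg,\fg] \subset \fg$, one has $[X,Y] \in [\fg,\fg]$, and the cyclic identity
\[
\Cyclic_{XYZ}\langle [X,Y],Z\rangle = 0
\]
simply descends to the subalgebra, since both the bracket and the inner product are restrictions. Thus $(G',\langle\cdot,\cdot\rangle|_{\fg'})$ is a compact semisimple cyclic metric Lie group.

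Finally I would apply Lemma \ref{cslg} to $G'$: a semisimple cyclic metric Lie group cannot be compact, contradicting the compactness of $G'$ (inherited from $G$, as the connected Lie subgroup associated to the ideal $[\fg,\fg]$ of a compact Lie algebra is compact). This contradiction completes the proof. The argument is essentially a one-line deduction from Lemma \ref{cslg} together with the compact-Lie-algebra decomposition; the only step needing care is checking that cyclicity is inherited by the derived subalgebra, but this is automatic since cyclicity is an intrinsic tensorial condition depending only on the bracket and the inner product restricted to the triple at hand.
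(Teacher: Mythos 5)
Your proof is correct and follows the paper's own argument essentially verbatim: the decomposition $\fg = \mathcal{Z}(\fg)\oplus[\fg,\fg]$ of a compact Lie algebra with $[\fg,\fg]$ compact semisimple, the observation that the cyclic condition restricts to the derived subalgebra, and the appeal to Lemma \ref{cslg} to reach a contradiction. The only cosmetic difference is that you phrase it via an explicit compact subgroup $G'$ with Lie algebra $[\fg,\fg]$, whereas the paper works directly at the Lie algebra level, but the substance is identical.
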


Since $\SU(2)$ is the only connected, simply-connected Lie group which admits a left-invariant metric of strictly positive sectional curvature (see \cite{Wal}), one directly obtains
\begin{corollary}
\label{noposi}
There is no cyclic metric Lie group  with strictly positive sectional curvature.
\end{corollary}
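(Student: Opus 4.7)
The plan is to argue by contradiction using the preceding result (Theorem~\ref{theo34}) combined with the cited theorem of Wallach. Suppose $(G,g)$ were a cyclic metric Lie group with strictly positive sectional curvature. Sectional curvature is preserved under Riemannian covers, and the pull-back of a left-invariant metric to the universal cover $\widetilde G$ is again left-invariant. The pulled-back inner product on $\fg = \mathrm{Lie}(\widetilde G) = \mathrm{Lie}(G)$ is literally the same, so the cyclic condition $\Cyclic_{XYZ}\langle [X,Y],Z\rangle = 0$ is preserved. Hence $(\widetilde G,\tilde g)$ is a connected, simply-connected cyclic metric Lie group with strictly positive sectional curvature.

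Next I would invoke Wallach's classification \cite{Wal}: the only connected, simply-connected Lie group admitting a left-invariant metric of strictly positive sectional curvature is $\SU(2)$. Therefore $\widetilde G \cong \SU(2)$, which is compact (and nonabelian). This contradicts Theorem~\ref{theo34}, which asserts that every nonabelian cyclic metric Lie group is noncompact.

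One small point to mention for completeness is that the abelian case is excluded automatically: by Proposition~\ref{pA}, a connected abelian cyclic metric Lie group is flat, so it cannot carry strictly positive sectional curvature. Thus the only case in which one could hope to find strictly positive sectional curvature is the nonabelian one, which is ruled out by the Wallach--Theorem~\ref{theo34} combination above.

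There is no real obstacle here; the corollary is essentially an immediate consequence of Theorem~\ref{theo34} once Wallach's classification is cited. The only thing to be careful about is the reduction to the simply-connected case and the verification that the cyclic property passes to the universal cover, both of which are routine since the bracket and the inner product on $\fg$ are unchanged.
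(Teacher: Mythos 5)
Your proof is correct and follows essentially the same route as the paper, which likewise derives the corollary directly from Wallach's classification of $\SU(2)$ as the only connected, simply-connected Lie group admitting a left-invariant metric of strictly positive sectional curvature, combined with Theorem~\ref{theo34}. The details you supply (passage to the universal cover preserving the cyclic condition, and exclusion of the abelian case via Proposition~\ref{pA}) are exactly the routine verifications the paper leaves implicit in its one-line deduction.
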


For  the simple  case we have the following result. 
\begin{theorem}
\label{simple}
The universal covering group $\widetilde{\SL(2,\R)}$ of $\SL(2,\R)$  is the only connected, simply-connected simple real cyclic metric Lie group.
\end{theorem}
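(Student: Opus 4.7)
The plan is to combine the non-compactness result (Theorem~\ref{theo34}) with the algebraic encoding of cyclic inner products in Lemma~\ref{cslg} and the standard structure of Cartan decompositions. Let $(G,g)$ be connected, simply-connected, simple, real and cyclic. By Theorem~\ref{theo34}, $G$ is non-compact, so $\fg$ admits a Cartan decomposition $\fg=\fk\oplus\fp$ with Killing form $B$ negative definite on $\fk$ and positive definite on $\fp$. I would choose a $B$-orthonormal basis $\{e_i\}_{i=1}^n$ adapted to this splitting: $e_1,\dots,e_m\in\fk$ with $\varepsilon_i=-1$, and $e_{m+1},\dots,e_n\in\fp$ with $\varepsilon_i=+1$.

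By Lemma~\ref{cslg}, the cyclic inner product is encoded by scalars $\lambda_i$ with $\langle e_i,e_i\rangle=\varepsilon_i\lambda_i$ and $c_{ij}^{\,k}(\lambda_i+\lambda_j+\lambda_k)=0$. Positive-definiteness forces $\lambda_i<0$ for $e_i\in\fk$ and $\lambda_i>0$ for $e_i\in\fp$. The key observation is that whenever $e_i,e_j,e_k$ are three distinct basis vectors all lying in $\fk$, the sum $\lambda_i+\lambda_j+\lambda_k$ is strictly negative and hence $c_{ij}^{\,k}=0$. Combined with $[\fk,\fk]\subset\fk$ and with $c_{ij}^{\,i}=c_{ij}^{\,j}=0$ (an immediate consequence of the $B$-skew-symmetry of $\ad$ in any $B$-orthogonal basis), this forces $[e_i,e_j]=0$ for all $e_i,e_j\in\fk$. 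Hence $\fk$ is abelian.

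Next I would invoke the structure of the symmetric pair $(\fg,\fk)$. Because $\fg$ is simple the pair is irreducible, so $\fk$ acts irreducibly on $\fp$; this action is nontrivial, since otherwise $\fk$ would lie in the center and contradict the simplicity of $\fg$. A compact abelian Lie algebra admits only trivial or $2$-dimensional real irreducible representations, so $\dim\fp=2$. Moreover $[\fp,\fp]=\fk$, since the alternative $[\fp,\fp]\subsetneq\fk$ would make $[\fp,\fp]\oplus\fp$ a proper ideal of $\fg$; hence $\dim\fk\leq\dim\Lambda^2\fp=1$ and $\dim\fg=3$. The only simple non-compact real Lie algebra of dimension three is $\fsl(2,\R)$, and simple-connectedness singles out the universal cover $\widetilde{\SL(2,\R)}$.

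To close the proof I would exhibit a cyclic metric on $\fsl(2,\R)$: with standard generators $H,X,Y$ and $B$-orthogonal basis $H,X+Y,X-Y$ of signatures $(+,+,-)$, the constraints of Lemma~\ref{cslg} reduce to the single equation $\lambda_1+\lambda_2+\lambda_3=0$, which is solved for instance by $(1,1,-2)$. The main obstacle is the sign argument that pins $\fk$ as abelian; once that is in place, the two-dimensional bound on real irreducible representations of a torus together with the identity $[\fp,\fp]=\fk$ yield $\dim\fg=3$ without recourse to the Cartan classification of simple real Lie algebras.
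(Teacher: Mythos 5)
Your first half follows the paper's own strategy (a Cartan decomposition plus the sign constraints of Lemma~\ref{cslg}), but the way you invoke the lemma has a genuine gap. Lemma~\ref{cslg} does not assert that a given cyclic inner product is diagonal, with the constraints \eqref{cc1}, in an \emph{arbitrary} $B$-orthonormal basis: in its proof the basis is an eigenbasis of the $B$-selfadjoint operator $Q$ defined by $\langle X,Y\rangle=B(QX,Y)$. You instead fix a $B$-orthonormal basis adapted to a chosen Cartan decomposition $\fg=\fk\oplus\fp$ and then assume the lemma's encoding holds in that basis. Nothing guarantees compatibility: $Q$ need not preserve $\fk$ and $\fp$, so no eigenbasis of $Q$ need be adapted, and conversely your adapted basis need not diagonalize $\langle\cdot,\cdot\rangle$; nor is the span of the negative-$\varepsilon_i$ eigenvectors of $Q$ a subalgebra in general, so you cannot simply substitute it for $\fk$. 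The repair is to restrict rather than to adapt: $\fk$ is a subalgebra, the restriction of a cyclic inner product to a subalgebra is again cyclic, and $B|_{\fk}$ is negative definite and $\ad(\fk)$-invariant, so the diagonalization argument of Lemma~\ref{cslg} runs inside $(\fk,B|_{\fk})$, where a selfadjoint operator with respect to a \emph{definite} form really is diagonalizable. There every $\varepsilon_i=-1$, hence every $\lambda_i<0$ by positivity of the metric, every sum $\lambda_i+\lambda_j+\lambda_k$ is strictly negative, and all structure constants of $\fk$ vanish --- your remark that $c_{ij}^{\,i}=c_{ij}^{\,j}=0$ in a $B$-orthogonal basis disposes of the repeated-index components. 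This restriction argument is exactly the paper's step: since $B$ is strictly negative definite on the maximal compactly embedded subalgebra, \eqref{cc1} forces it to be abelian.

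Once $\fk$ is abelian, your route genuinely diverges from the paper's, which at this point simply cites Knapp's tables for the simple real Lie algebras whose maximal compactly embedded subalgebra is abelian. Your alternative --- irreducibility of the isotropy representation of $\fk$ on $\fp$ for simple $\fg$ (a standard fact, but it should be cited, e.g.\ from Helgason), the bound that real irreducible representations of a compact abelian algebra have dimension at most two, and $[\fp,\fp]=\fk$ because $[\fp,\fp]\oplus\fp$ is a nonzero ideal --- correctly yields $\dim\fp=2$, $\dim\fk=1$, $\dim\fg=3$, hence $\fg\cong\fsl(2,\R)$ and $G=\widetilde{\SL(2,\R)}$ by simple connectedness. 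This is more elementary and self-contained than the appeal to the classification, and it also covers uniformly the complex simple algebras regarded as real (there $\fk$ is a compact real form, never abelian), a case the paper's setup via noncompact duals treats only implicitly. Two small points you gloss over but which are easy: $\fk\neq 0$ (a Cartan involution acting as $-\mathrm{id}$ would kill all brackets), and nontriviality of the isotropy action (otherwise $\fk$ would be a proper nonzero ideal, not merely central). Your closing existence check on $\fsl(2,\R)$ is correct and recovers a special case of the family \eqref{basis} exhibited in Section~6; note finally that Theorem~\ref{theo34} is not strictly needed as input, since if $\fg$ were of compact type the first step would make $\fg=\fk$ abelian, contradicting simplicity.
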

\begin{proof}
Let $\fg_\C$ be a simple Lie algebra over $\C$ and let $\sigma$ be an involutive automorphism of a compact real form $\fg$ of $\fg_\C$. Then $\fg = \fu \oplus \fp$, where $\fu$ and $\fp$ denote the eigenspaces of $\sigma$ with eigenvalues $+1$ and $-1$, respectively. Let $(\fg^{*},\sigma^{*})$ be the dual orthogonal symmetric Lie algebra of $(\fg,\sigma)$, $\fg^{*}$ being the subspace of $\fg_{\C}$ defined by the Cartan decomposition $\fg^{*} = \fu \oplus \I\fp$.

If $\fg^*$ admits a cyclic left-invariant inner product, since the Killing form $B$ is strictly negative definite on the maximal compactly embedded subalgebra $\fu$,
it follows from (\ref{cc1}) that $\fu$ must be  abelian.  Now, according for instance to \cite[pp.\ 695--718]{Kna}, the only simple real Lie algebras
whose maximal compact Lie subalgebra is  abelian  are
$\fsl(2,\R)$, $\fsu(1,1)$, $\fso(2,1)$ and $\fsp(1,\R)$, which are mutually isomorphic
and define the connected, simply-connected simple Lie group $\widetilde{\SL(2,\R)}$.
\end{proof}

\section{Solvable cyclic metric Lie groups}
\label{seccin}
\setcounter{equation}{0}

Let $G$ be an $n$-dimensional solvable Lie group. Then its Lie algebra ${\mathfrak g}$ satisfies the chain condition or, equivalently, there exists a sequence
\[
{\mathfrak g} = {\mathfrak g}_{0} \supset {\mathfrak g}_{1}\supset \dots \supset {\mathfrak g}_{n-1} \supset{\mathfrak g}_{n} = \{0\}\,,
\] 
where ${\mathfrak g}_{r}$ is an ideal in ${\mathfrak g}_{r-1}$ of codimension $1$, $1\leq r\leq n$. Given an inner product $\langle\cdot,\cdot\rangle$ on ${\mathfrak g}$ we construct an orthonormal basis $\{e_{1},\dotsc ,e_{n}\}$, called {\em adapted to} $({\mathfrak g},\langle\cdot,\cdot\rangle)$, such that ${\mathfrak g}_{n-i} = \R\{e_{1},\dotsc ,e_{i}\}$, $i=1,\dotsc ,n$. Then the structure constants $c_{ij}^{k} = \langle[e_{i},e_{j}],e_{k}\rangle$ of $G$ with respect  to this  basis satisfy
\begin{equation}
\label{solv}
c_{ij}^{k} = 0 \quad \mbox{for}\;\,k  \geq {\rm max}\{i,j\}.
\end{equation}
Hence, we have the next result.
\begin{proposition}\label{pco}
The following conditions are equivalent for an $n$-di\-men\-sio\-nal solvable Lie group $G$. \smallskip

\noindent {\rm (i)}  $G$ is a cyclic metric Lie group. \smallskip

\noindent {\rm (ii)}  $c_{ik}^{j} = c_{jk}^{i}$, for all $ 1\leq i<j<k\leq n$. \smallskip

\noindent {\rm (iii)}  $ \ad_{e_{i}}$ is selfadjoint on ${\mathfrak g}_{n-i}$, for all $i = 1\dotsc,n$.
\end{proposition}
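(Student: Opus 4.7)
The plan is to use the adapted basis $\{e_1,\dots,e_n\}$ together with \eqref{solv} and check each equivalence by a direct index computation; all three conditions will collapse to the same bilinear relation among the structure constants.

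First I would rewrite the cyclic condition in the adapted basis. By the antisymmetry of the bracket it suffices to test the condition on triples of \emph{distinct} basis vectors, and by the symmetry of the cyclic sum we may assume $1\le i<j<k\le n$. In that range \eqref{solv} forces $c_{ij}^{k}=0$ (since $k>\max\{i,j\}=j$), whereas $c_{jk}^{i}$ and $c_{ki}^{j}$ need not vanish because $i,j<k=\max\{j,k\}=\max\{k,i\}$. Thus the cyclic identity $c_{ij}^{k}+c_{jk}^{i}+c_{ki}^{j}=0$ reduces to $c_{jk}^{i}+c_{ki}^{j}=0$, which by antisymmetry is precisely $c_{ik}^{j}=c_{jk}^{i}$. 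This gives (i)$\Leftrightarrow$(ii).

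Next I would observe that because $\mathfrak{g}_{n-i}$ is a subalgebra (being an ideal in $\mathfrak{g}_{n-i-1}$), the map $\ad_{e_i}$ preserves $\mathfrak{g}_{n-i}=\R\{e_1,\dots,e_i\}$; moreover \eqref{solv} already guarantees that $[e_i,e_j]\in\mathfrak{g}_{n-i}$ for every $j\le i$. Hence selfadjointness of $\ad_{e_i}$ on $\mathfrak{g}_{n-i}$ is equivalent, in the orthonormal basis, to
\[
c_{ij}^{\,k}=\langle[e_i,e_j],e_k\rangle=\langle e_j,[e_i,e_k]\rangle=c_{ik}^{\,j}\qquad\text{for all } j,k\le i.
\]
The diagonal cases $j=k$, and the cases where one of $j,k$ equals $i$, are trivial (using \eqref{solv}), so the content is the identity $c_{ij}^{\,k}=c_{ik}^{\,j}$ for distinct $j<k<i$. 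Relabeling by setting the indices in increasing order and applying the antisymmetry $c_{xy}^{z}=-c_{yx}^{z}$, this becomes exactly the condition in (ii). Thus (ii)$\Leftrightarrow$(iii).

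The only subtlety I anticipate is the bookkeeping of indices when matching (iii) with (ii): both encode the same symmetry, but (ii) lists the index $i$ with $i<j<k$, while the natural statement coming out of (iii) places the distinguished index in the outermost position. That transcription, together with careful use of \eqref{solv} to discard vanishing terms, is the main thing to be done; no deeper input is needed.
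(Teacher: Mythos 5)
Your proof is correct and takes essentially the approach the paper intends: the paper derives \eqref{solv} from the adapted orthonormal basis and then simply asserts the proposition (``Hence, we have the next result''), leaving precisely the index computation you carry out. Your reduction of the cyclic condition to ordered triples $1\le i<j<k\le n$ (using \eqref{solv} to kill $c_{ij}^{k}$) and your relabeling that matches the selfadjointness relation $c_{ij}^{k}=c_{ik}^{j}$ for $j<k<i$ with condition (ii) are both accurate, so there is no gap.
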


\begin{remark} The property (\ref{solv}) together with Proposition \ref{pco} (ii) determine completely the structure constants of any solvable cyclic metric Lie group. Then they can be useful to give examples and in fact to obtain classifications for low dimensions.
\end{remark}

\smallskip

Let $G_{1}$ and $G_{2}$ be Lie groups equipped with left-invariant metrics determined by inner products $\langle\cdot,\cdot\rangle_{1}$ and $\langle\cdot,\cdot\rangle_{2}$ on their corresponding Lie algebras ${\mathfrak g}_{1}$ and ${\mathfrak g}_{2}$. For each homomorphism $\pi$ of $G_{1}$ into the group $\Aut(G_{2})$  of automorphisms of $G_{2}$, consider the semidirect product $G_{1}\ltimes_{\pi}G_{2}$ equipped with the Riemannian product on $G_{1}\times G_{2}$. Its Lie algebra is the semidirect sum $\fg_{1}+_{\pi_{*}}\fg_{2}$ of the Lie algebras $\fg_{1}$ and $\fg_{2}$. The differential $\pi_{*}$ of $\pi$ is a Lie algebra homomorphism $\fg_{1}\to \mathrm{Der}(\fg_{2})$ and the bracket product on ${\mathfrak g}_{1}+_{\pi_*}{\mathfrak g}_{2}$ is given by
\[
[(X_{1},X_{2}),(Y_{1},Y_{2})] = \bigl([X_{1},Y_{1}],\, [X_{2},Y_{2}] + \pi_{*}(X_{1})(Y_{2}) - \pi_{*}(Y_{1})(X_{2})\bigr),
\]
for all $X_{i}, Y_{i}\in {\mathfrak g}_{i}$, $i = 1,2$. Since the inner product $\langle\cdot,\cdot\rangle = \langle\cdot,\cdot\rangle_{1} + \langle\cdot,\cdot\rangle_{2}$ on ${\mathfrak g}_{1}+_{\pi_*}{\mathfrak g}_{2}$ satisfies
$$
\begin{array}{lcl}
\langle [(X_{1},X_{2}),(Y_{1},Y_{2})],(Z_{1},Z_{2})\rangle & = & \langle [X_{1},Y_{1}],Z_{1}\rangle_{1} + \langle [X_{2},Y_{2}],Z_{2}\rangle_{2}\\[0.4pc]
& & \;\,+\, \langle \pi_{*}(X_{1})(Y_{2}) - \pi_{*}(Y_{1})(X_{2}),\,Z_{2}\rangle_{2}.
\end{array}
$$
one gets the following  result.
\begin{lemma}
\label{semidi} Let $G_1$ and $G_2$ be Lie groups, each of them equipped with a left-invariant metric. The Riemannian product metric  on $G_{1}\times G_{2}$ defines a cyclic left-invariant metric on the semidirect product $G_{1}\ltimes_{\pi}G_{2}$ if and only if the left-invariant metrics on $G_{1}$ and $G_{2}$ are cyclic and the derivation $\pi_{*}(X_{1})$ on ${\mathfrak g}_{2}$, for each $X_{1}\in {\mathfrak g}_{1}$,  is selfadjoint with respect to $\langle\cdot,\cdot\rangle_{2}$.
\end{lemma}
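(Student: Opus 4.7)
The plan is to work directly from the displayed bracket-inner-product formula that precedes the statement. With respect to the orthogonal decomposition $\mathfrak{g}_1+_{\pi_*}\mathfrak{g}_2$, the cyclic sum $\Cyclic_{UVW}\langle [U,V],W\rangle$ splits additively into three independent cyclic sums: a pure $\mathfrak{g}_1$-piece $\Cyclic\langle[X_1,Y_1],Z_1\rangle_1$, a pure $\mathfrak{g}_2$-piece $\Cyclic\langle[X_2,Y_2],Z_2\rangle_2$, and a mixed piece $\Cyclic\langle\pi_*(X_1)(Y_2)-\pi_*(Y_1)(X_2),\,Z_2\rangle_2$. The equivalence in the lemma corresponds to the three pieces vanishing identically.

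For the sufficient direction, assuming both factors carry cyclic inner products and that every $\pi_*(X_1)$ is selfadjoint on $(\mathfrak{g}_2,\langle\cdot,\cdot\rangle_2)$, the first two pieces vanish by hypothesis. Regrouping the mixed cyclic sum according to which derivation $\pi_*(W_1)$ appears, it reorganizes as a sum of three expressions of the shape
\[
\langle\pi_*(W_1)(A),B\rangle_2-\langle\pi_*(W_1)(B),A\rangle_2,
\]
each of which vanishes by selfadjointness of $\pi_*(W_1)$.

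For the converse, one specializes the cyclic identity to three natural kinds of triples. Taking $((X_1,0),(Y_1,0),(Z_1,0))$ kills the second and the mixed pieces and extracts the cyclic condition on $\mathfrak{g}_1$; symmetrically, $((0,X_2),(0,Y_2),(0,Z_2))$ yields the cyclic condition on $\mathfrak{g}_2$. Finally, the mixed triple $((X_1,0),(0,Y_2),(0,Z_2))$ makes both pure pieces vanish and, since $\pi_*(0)=0$ and the slot $X_2=0$ annihilates one of the mixed inner products outright, only two terms of the mixed cyclic sum survive, collapsing the identity to
\[
\langle\pi_*(X_1)(Y_2),Z_2\rangle_2 = \langle\pi_*(X_1)(Z_2),Y_2\rangle_2,
\]
which is the selfadjointness of $\pi_*(X_1)$. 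The computation is routine and presents no real obstacle; the only care needed is in regrouping the mixed cyclic sum so that, in each direction, the cancellations (respectively, the effect of the chosen specializations) expose each of the three conclusions cleanly and independently.
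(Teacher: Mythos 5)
Your proposal is correct and follows essentially the same route as the paper: the paper deduces the lemma directly from the displayed formula for $\langle[(X_{1},X_{2}),(Y_{1},Y_{2})],(Z_{1},Z_{2})\rangle$ on ${\mathfrak g}_{1}+_{\pi_{*}}{\mathfrak g}_{2}$, leaving the routine cyclic-sum computation implicit, and your regrouping of the mixed terms by $\pi_{*}(X_{1})$, $\pi_{*}(Y_{1})$, $\pi_{*}(Z_{1})$ together with the three specializations $((X_{1},0),(Y_{1},0),(Z_{1},0))$, $((0,X_{2}),(0,Y_{2}),(0,Z_{2}))$ and $((X_{1},0),(0,Y_{2}),(0,Z_{2}))$ simply makes that verification explicit in both directions. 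There is no gap.
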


Next, we show that any solvable cyclic metric Lie group can be expressed as an orthogonal semidirect product. Concretely, we have the next result. 
\begin{proposition}
\label{psplit}
Any nontrivial connected, simply-connected solvable cyclic metric Lie group  decomposes into an orthogonal semidirect product $\R\ltimes_{\pi}N$, where $N$ is a unimodular normal Lie subgroup of codimension one and $\pi_{*}(\D/\D t)$ $= \ad_{\D/\D t}$  is a selfadjoint derivation on the Lie algebra of $N$.
\end{proposition}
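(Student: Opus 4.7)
The plan is to find a codimension-one ideal $\mathfrak{n}$ of $\mathfrak{g}$, take $W$ to be a unit vector orthogonal to $\mathfrak{n}$, and then verify, case by case, that $N$ (the simply-connected Lie group with Lie algebra $\mathfrak{n}$) is unimodular, that $\ad_W|_{\mathfrak{n}}$ is selfadjoint, and that the resulting Lie algebra splitting lifts to the group level.

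First I would locate the ideal, distinguishing two cases. If $G$ is not unimodular, I take $\mathfrak{n}=\mathfrak{u}$, the unimodular kernel already appearing in the proof of Proposition~\ref{behav}(iii); it is automatically an ideal of codimension one. If $G$ is unimodular, solvability supplies a nonzero abelian quotient $\mathfrak{g}/[\mathfrak{g},\mathfrak{g}]$, and any codimension-one subspace of it pulls back to a codimension-one ideal $\mathfrak{n}\subset\mathfrak{g}$. Either way, write $\mathfrak{g}=\R W\oplus\mathfrak{n}$ as an orthogonal direct sum.

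For unimodularity of $\mathfrak{n}$: since $\mathfrak{n}$ is an ideal, $\ad_X(W)\in\mathfrak{n}$ for every $X\in\mathfrak{n}$, so in a basis beginning with $W$ the matrix of $\ad_X$ has a zero in its $W$-diagonal entry and $\tr\ad_X|_{\mathfrak{g}}=\tr\ad_X|_{\mathfrak{n}}$; the left-hand side vanishes by the defining property of $\mathfrak{u}$ in the first case and automatically in the second. For selfadjointness of $\ad_W|_{\mathfrak{n}}$, I expand the cyclic identity with arguments $W,Y,Z$ for $Y,Z\in\mathfrak{n}$: $\langle[W,Y],Z\rangle+\langle[Y,Z],W\rangle+\langle[Z,W],Y\rangle=0$. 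The middle term drops because $[Y,Z]\in\mathfrak{n}$ is orthogonal to $W$, yielding $\langle[W,Y],Z\rangle=\langle[W,Z],Y\rangle$ as required.

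Finally, the restriction of the inner product to $\mathfrak{n}$ is again cyclic as a special case of the identity on $\mathfrak{g}$, so $N$ carries a cyclic left-invariant metric. The only step that is not purely algebraic is the passage from the Lie-algebra splitting to the group-level splitting $G\cong\R\ltimes_{\pi}N$; this is where simple-connectedness of $G$ enters, via the fact that the short exact sequence $N\hookrightarrow G\twoheadrightarrow\R$ admits the smooth section $t\mapsto\exp(tW)$. Once we have that, $\pi_{*}(\D/\D t)=\ad_W|_{\mathfrak{n}}$ is a selfadjoint derivation, and Lemma~\ref{semidi} certifies the resulting semidirect product as orthogonal and cyclic. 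The main point requiring care is the unimodular case, where the choice of $\mathfrak{n}$ is not forced upon us as it is by the unimodular kernel in the non-unimodular case; the rest of the argument is a direct manipulation of the cyclic identity.
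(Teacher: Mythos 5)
Your proof is correct, and it follows the paper's overall two-case skeleton (nonunimodular versus unimodular), with the nonunimodular case handled identically via the unimodular kernel $\fu$. The difference lies in the unimodular case: the paper takes an adapted orthonormal basis $\{e_{1},\dotsc,e_{n}\}$ coming from the chain of ideals of the solvable algebra and uses $\mathfrak{g}_{1}=\R\{e_{1},\dotsc,e_{n-1}\}$ as the codimension-one ideal, with selfadjointness of $\ad_{e_{n}}$ on $\mathfrak{g}_{1}$ already available from the structure-constant conditions \eqref{solv} and Proposition \ref{pco}; you instead pull back a hyperplane of $\fg/[\fg,\fg]$ (nonzero by solvability) to get a codimension-one ideal $\mathfrak{n}\supset[\fg,\fg]$ and prove selfadjointness of $\ad_{W}|_{\mathfrak{n}}$ directly from the cyclic identity $\Cyclic_{XYZ}\langle[X,Y],Z\rangle=0$ with $X=W\perp\mathfrak{n}$, killing the middle term since $[Y,Z]\in\mathfrak{n}$. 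Both mechanisms are sound; the paper's recycles machinery it has already set up for the low-dimensional classifications, while yours is more self-contained and in fact shows the decomposition works for \emph{any} codimension-one ideal containing the derived algebra, so no adapted basis is needed. You also make explicit two points the paper leaves implicit: the unimodularity of $N$ (via $\tr\ad_{X}|_{\fg}=\tr\ad_{X}|_{\mathfrak{n}}$ for $X\in\mathfrak{n}$, using that $[X,W]\in\mathfrak{n}$) and the passage from the Lie-algebra splitting to the group-level semidirect product, where simple connectedness is genuinely used; the paper settles the latter with a one-line appeal to the simply-connected group with Lie algebra $\fg$. These additions are worthwhile, as the trace argument in particular is exactly what justifies the word ``unimodular'' in the statement.
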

\begin{proof} Any (real) nonunimodular metric Lie algebra $(\fg,\langle\cdot,\cdot\rangle)$,  not necessarily  solvable, can be expressed as an orthogonal semidirect sum $\fg = \R W +_{\ad_{W}}  \fu$, where $\fu$ is its unimodular kernel. For the unimodular case, we remark that if $(\fg,\langle\cdot,\cdot\rangle)$ is a nontrivial solvable Lie algebra equipped with a  cyclic left-invariant  inner product and $\{e_{1},\dotsc ,e_{n}\}$ is an adapted orthonormal basis, then ${\mathfrak g}$ splits into the orthogonal semidirect sum of $\R e_{n}$ and the ideal ${\mathfrak g}_{1} = \R\{e_{1},\dotsc,e_{n-1}\}$, the restriction of $\langle\cdot,\cdot\rangle$ to ${\mathfrak g}_{1}$ is also  cyclic left-invariant   and $\ad_{e_{n}}$  is selfadjoint on ${\mathfrak g}_{1}$. Hence, the  connected and simply-connected Lie group with Lie algebra $\fg$ satisfies the conditions in the statement. 
\end{proof}

\begin{proposition}
\label{nabnilp}
Nonabelian  nilpotent Lie groups do not admit cyclic left-invariant metrics. 
\end{proposition}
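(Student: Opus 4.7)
The plan is to prove the statement by induction on $n=\dim\fg$, the case $n\leq 1$ being immediate. The two ingredients I shall use in the inductive step are Proposition \ref{psplit} and the standard fact that $\ad_X$ is nilpotent for every $X\in\fg$ whenever $\fg$ is nilpotent.

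Given a nilpotent Lie algebra $(\fg,\langle\cdot,\cdot\rangle)$ of dimension $n\geq 2$ carrying a cyclic inner product, $\fg$ is automatically solvable and unimodular. Applying Proposition \ref{psplit} in its unimodular form (that is, taking as $W$ the last vector $e_n$ of an adapted orthonormal basis in the sense of Proposition \ref{pco}) I obtain an orthogonal semidirect decomposition $\fg = \R e_n \oplus \fg_1$ in which $\fg_1$ is a codimension-one ideal, the restriction of $\langle\cdot,\cdot\rangle$ to $\fg_1$ remains cyclic, and $\ad_{e_n}$ is selfadjoint on $\fg_1$. Since $\fg_1$ is a subalgebra of the nilpotent algebra $\fg$ it is itself nilpotent, so the inductive hypothesis forces $\fg_1$ to be abelian.

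The step that closes the proof is the observation that the restricted operator $\ad_{e_n}$ acting on $\fg_1$ is simultaneously selfadjoint (by the decomposition above) and nilpotent (because $\ad_{e_n}$ is nilpotent on $\fg$ and preserves the ideal $\fg_1$). A selfadjoint operator on a Euclidean space is diagonalisable over $\R$, while nilpotency forces every eigenvalue to vanish; hence $\ad_{e_n}$ is identically zero on $\fg_1$. Combined with $\fg_1$ being abelian this shows that $\fg$ itself is abelian, which closes the induction. The only delicate point is to confirm that the adapted basis and the orthogonal splitting of Proposition \ref{psplit} really deliver both the selfadjointness of $\ad_{e_n}$ on $\fg_1$ and the cyclicity of the restricted metric in the unimodular sub-case; once that bookkeeping is in place, the ``selfadjoint $+$ nilpotent $=0$'' remark does all the remaining work and no further obstacle arises.
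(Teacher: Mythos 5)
Your proof is correct, but it takes a genuinely different route from the paper's. The paper's own argument is a direct, two-line one via the descending central series: cyclicity immediately forces $\mathcal{Z}(\fg)\perp[\fg,\fg]$ (take $Z$ central in $\cyclic_{XYZ}\langle[X,Y],Z\rangle=0$; the two summands involving $[Y,Z]$ and $[Z,X]$ vanish), while for a nilpotent algebra of class $m\geq 2$ the nonzero subspace $\mathcal{C}^{m-1}\fg$ lies in both the center and the derived algebra, which is impossible for orthogonal subspaces; hence $m=1$ and $\fg$ is abelian. You instead run an induction on dimension through the codimension-one orthogonal splitting $\fg=\R e_n\oplus\fg_1$, and your flagged ``delicate point'' is indeed already settled in the paper: the proof of Proposition \ref{psplit} explicitly records, for any nontrivial solvable metric Lie algebra with adapted orthonormal basis, that $\fg_1=\R\{e_1,\dotsc,e_{n-1}\}$ is an ideal, that the restriction of $\langle\cdot,\cdot\rangle$ to $\fg_1$ is again cyclic, and that $\ad_{e_n}$ is selfadjoint on $\fg_1$ (this also follows directly from cyclicity together with $[\fg_1,\fg_1]\subset\fg_1\perp e_n$, or from Proposition \ref{pco}); note also that no circularity arises, since Proposition \ref{psplit} is proved independently of the statement at hand. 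With that in place, your closing step is sound: $\ad_{e_n}$ restricted to the invariant ideal $\fg_1$ is nilpotent (Engel, since $\fg$ is nilpotent) and selfadjoint, hence real-diagonalizable with all eigenvalues zero, hence zero; combined with the inductive hypothesis that $\fg_1$ is abelian, all brackets vanish. Comparing the two: the paper's route is shorter, induction-free, and independent of the solvable structure theory of Section 5, whereas yours makes the section's machinery (the adapted basis and Proposition \ref{psplit}) do visible work and isolates the reusable linear-algebra observation ``selfadjoint $+$ nilpotent $\Rightarrow 0$'', which is the same mechanism (there in the form ``selfadjoint $+$ skew-symmetric $\Rightarrow 0$'') that drives the paper's proof of Proposition \ref{pA}.
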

\begin{proof}
Consider the central descending series of the Lie algebra ${\mathfrak g}$ of a Lie group $G$,
\[
{\mathfrak g} = {\mathcal C}^{0} {\mathfrak g} \supset {\mathcal C}^{1}{\mathfrak g} \supset \dots \supset {\mathcal C}^{m-1}{\mathfrak g} \supset {\mathcal C}^{m}{\mathfrak g}= \{0\},
\]
defined by ${\mathcal C}^{p+1}{\mathfrak g} = [{\mathfrak g}, {\mathcal C}^{p}{\mathfrak g}]$, $p = 0,1, \dotsc, m-1$. Then ${\mathcal C}^{m-1}{\mathfrak g}\neq \{0\}$, it is in the center ${\mathcal Z}({\mathfrak g})$ of ${\mathfrak g}$ and, if  $m\geq 2$, then  ${\mathcal C}^{m-1}{\mathfrak g}\subset [{\mathfrak g},{\mathfrak g}]$. Because ${\mathcal Z}({\mathfrak g})$ and the derived algebra $[{\mathfrak g},{\mathfrak g}]$ of ${\mathfrak g}$ are orthogonal with respect to any cyclic left-invariant metric, $m$ must be $1$ and  this  implies that ${\mathfrak g}$ is abelian.
\end{proof}

\begin{example} The metric solvable Lie group $G^{\,n}(\alpha_1,\alpha_{\,2},\ldots,\alpha_{\,n-1})$.   
\label{solvable-gen-1}

For $(\alpha_1,\alpha_{\,2},\ldots,\alpha_{\,n-1})\in\R^{n-1}\setminus\{(0,\ldots,0)\}$,
let $\fg=\fg(\alpha_1,\alpha_{\,2},\ldots,\alpha_{\,n-1})$ be the $n$-dimensional metric Lie algebra generated by the
  basis $\{e_1,\ldots,e_{n}\}$
 with Lie brackets
\[
  [e_{n},e_i]=\alpha_{\,i} e_i, \;\ 1\leq i\leq n-1\,; \quad [e_i,e_j]=0, \;\ 1\leq i<j\leq n-1, 
\]
and equipped with the inner product $\langle\cdot,\cdot\rangle$ for which
$\{e_1,\ldots,e_{n}\}$ is orthonormal. Then $\fg$ can be identified with the
orthonormal semidirect sum $\R\{e_{n}\}  +_{\ad_{e_n}}  \R\{e_1,\ldots,e_{n-1}\}$ under the adjoint
representation. It is unimodular if and only if $\sum_{i=1}^{n-1}\alpha_{\,i} = 0$.

The connected and simply-connected Lie group $G$ generated by $\fg$ must
be isomorphic to the orthogonal semidirect product $\R\ltimes_\pi\R^{n-1}$ under the action
$\pi\colon\R\rightarrow\Aut(\R^{n-1})$ given by 
$\pi(t) = \diag\left(\E^{\alpha_1 t}, \E^{\alpha_{\,2} t}, \dots, \E^{\alpha_{\,n-1} t}\right)$. 
Because the  structure constants  $c_{ij}^{k}$ with respect to $\{e_{i}\}_{i=1}^{n}$  satisfy  \eqref{solv}, $G$ is a solvable metric Lie group with $\{e_{i}\}_{i=1}^{n}$ as an adapted orthonormal basis. Moreover, from Proposition~\ref{pco} or using Lemma \ref{semidi}, $\langle\cdot,\cdot\rangle$ determines a cyclic left-invariant metric.  The group $G$  can be described as 
the group $G^{\,n}(\alpha_1,\alpha_{\,2},\dotsc,\alpha_{\,n-1})$ of matrices of the form
\[
  \left(
   \begin{array}{ccccc}
\E^{\alpha_1 u} & 0 & \cdots & 0 & x_1\\
\noalign{\smallskip}
0 & \E^{\alpha_{\,2} u} & \cdots & 0 & x_2\\
\noalign{\smallskip}
\vdots & \vdots & \ddots & \vdots & \vdots\\
\noalign{\smallskip}
0& 0& \ldots & \E^{\alpha_{\,n-1} u} & x_{n-1} \\
\noalign{\smallskip}
0 & 0 &  \cdots & 0 & 1
\end{array}
\right).
\]
We can consider the global coordinate system $(u,x_1,\dotsc,x_{n-1})$ of this matrix group, and for each $A\in G^{\,n}(\alpha_1,\dotsc,\alpha_{\,n-1})$, we have
$u\comp L_{A} = u(A) + u$, $x_i\comp L_{A} = x_i(A)+\E^{\alpha_{\,i} u(A)}x_i$, $1\leq i\leq n-1$. 
Hence the left-invariant Riemannian metric $g$ on $G^{\,n}(\alpha_1,\alpha_{\,2}$, $\dotsc,\alpha_{\,n-1})$ 
defined by the inner product $\langle\cdot,\cdot\rangle$ on $\fg$ is
  \[
g = \D u^{2} + \sum_{i=1}^{n-1}\E^{-2\alpha_{\,i} u}\D x_i^2.
\]
The Levi-Civita connection $\na$ of $g$ is given by $\na_{e_{n}}e_{n}=\na_{e_{n}}e_i=0$, $\na_{e_i}e_{n}=-\alpha_{\,i} e_i$, 
$\na_{e_i}e_j=\delta_{ij}\alpha_{\,i} e_{n}$, $1\leq i,j\leq n-1$.  
This implies that $V = e_{n}$ is a geodesic vector and, from \cite[Proposition 6.1]{GDV}, it is a harmonic vector field. Moreover, it defines a harmonic immersion into the unit tangent sphere of $G^{\,n}(\alpha_1,\alpha_{\,2},\dotsc,\alpha_{\,n-1})$ if and only if $\sum_{i=1}^{n-1}\alpha_{\,i}^{3}= 0$. The curvature tensor field satisfies
$R_{e_ie_{n}}e_i= -\alpha_{\,i}^2 e_{n}$, $R_{e_ie_j}e_i=-\alpha_{\,i}\alpha_j\,e_j$, $1\leq i\not= j \leq n-1$. 
Then $\{e_{1},\dotsc,e_{n}\}$ is a basis of eigenvectors for the Ricci tensor and the principal Ricci curvatures are
\begin{equation}\label{Ricci-Galphai}
  r(e_{n})=-\sum_{j=1}^{n-1}\alpha_j^2, \quad r(e_i)=-\alpha_{\,i} \sum_{j=1}^{n-1}\alpha_j, \quad 1\leq i\leq n-1.
\end{equation}
The scalar curvature is
\[
      s=-2\Big(\sum_{i=1}^{n-1}\alpha_{\,i}^{2} + \sum_{i<j}^{n-1}\alpha_{\,i}\alpha_{j}\Big).
\]
For the sectional curvatures of basic sections we have
$K(e_i,e_{n})= -\alpha_{\,i}^2$, $K(e_i,e_j)$ $=-\alpha_{\,i}\alpha_j$, $1\leq i\not= j \leq n-1$. 

If $G^{\,n}(\alpha_1,\dotsc,\alpha_{\,n-1})$ is unimodular, i.e., $\sum_{i=1}^{n-1}\alpha_{\,i} = 0$, there exist $i,j\in \{1,\dotsc,n-1\}$ such that $\alpha_{\,i}\alpha_{j}<0$. Then, according with Proposition \ref{behav}, one finds both positive and negative sectional curvatures. If $\alpha_1=\cdots=\alpha_{\,n-1}=\alpha\not=0$, the metric Lie group $G^{\,n}(\alpha_1,\ldots,\alpha_{\,n-1})$
gives the solvable description $H^{n}(\alpha)$ of  the $n$-dimensional hyperbolic space
with constant sectional curvature $-\alpha^{2}$.
\end{example}

\begin{example} The metric  solvable Lie group $H^{n+1}(\rho_1,\ldots,\rho_{n-1};\lambda_1,\ldots,\lambda_{n-2})$. 
\label{solvable-gen-2}

Following Proposition \ref{psplit}, we determine the connected, simply-connected, 
solvable cyclic metric Lie groups, which are nontrivial one-dimensional orthogonal extensions
of the unimodular Lie group $G^{\,n}(\alpha_1,\ldots,\alpha_{\,n-1})$, 
$(\alpha_1,\ldots,\alpha_{\,n-1}) \not= (0,\ldots,0)$, $\alpha_1+\dotsb+\alpha_{\,n-1}=0$. 

Let $(H,g)$ be one such extension. The corresponding metric Lie algebra $\fh$ is then an orthogonal
semidirect sum $\fh=\R\{e_0\} +_{\ad_{e_0}} \R\{e_1,\ldots,e_{n}\}$, where $\{e_0,e_1,\ldots,e_{n}\}$ is an orthonormal basis of $\fh$ such that 
$[e_{n},e_i]=\alpha_{\,i} e_i$, $1\leq i\leq n-1$, $[e_i,e_j]=0$, $1\leq i<j\leq n-1$, and $\ad_{e_0}$ must act on $\R\{e_1,\ldots,e_{n}\}$ as a  selfadjoint operator, because $g$ must be
cyclic left-invariant. In terms of $\{e_1,\ldots,e_{n}\}$ we can write $\ad_{e_0}= (a_i^j)_{1\leq  i,j\leq n}$,  $a_i^j=a_j^i$. Since $\ad_{e_0}$ must also act as a derivation on $\R\{e_1,\ldots,e_{n}\}$, applying $\ad_{e_0}$ to
$[e_i,e_j]$ one has $a_i^{n}\alpha_j=0$, $1\leq i\not=j\leq n-1$, and applying $\ad_{e_0}$ to $[e_{n},e_j]$, we get
$a_i^{n}\alpha_{\,i}=0$, $a_{n}^{n}\alpha_{\,i}=0$, $1\leq i\leq n-1$, 
  then $a_{n}^k=a_k^{n}=0$ for $1\leq k\leq n$, and hence $[e_0, e_{n}]=0$.
  Thus $\ad_{e_0}$ and $\ad_{e_{n}}$ are commuting operators of $\R\{e_1,\dotsc,e_{n-1}\}$,
  hence there exist an orthonormal basis $\{v_1,\dotsc$, $v_{n-1}\}$ of $\R\{e_1,\dotsc,e_{n-1}\}$
  which consists of eigenvectors for both $\ad_{e_0}$ and $\ad_{e_{n}}$. We put $u_0=e_0$,
  $v_0=e_{n}$, and  then $\{u_0,v_0,v_1,\ldots,v_{n-1}\}$ is an orthonormal basis of $\fh$
  such that 
  \[
   [u_0,v_i]=\rho_i v_i, \quad [v_0,v_i]=\lambda_{\,i} v_i, \qquad 1\leq i\leq n-1,
  \]
  for some $(\rho_1,\ldots,\rho_{n-1})\not=(0,\ldots,0)$ (because $\ad_{e_0}$ must act as a nontrivial
  operator  on $\R\{e_1,\ldots,e_{n-1}\}$), and $(\lambda_1,\ldots,\lambda_{n-1})\not=(0,\ldots,0)$,
  with $\lambda_1+\dotsb+\lambda_{n-1}=0$  (because $\textrm{tr}\, \ad_{e_{n}}=\alpha_1+\dotsb+\alpha_{\,n-1}=0$).
   Therefore, $\fh$ is an orthogonal semidirect sum of its abelian subalgebras $\R\{u_0,v_0\}$
   and $\R\{v_1,\ldots,v_{n-1}\}$, and the connected simply-connected Lie group
   $H=H^{n+1}(\rho_1$, $\ldots,\rho_{n-1};\lambda_1,\ldots,\lambda_{n-2})$ generated by $\fh$ is the
   semidirect product $\R^2\ltimes_\pi\R^{n-1}$ under the action
$\pi\colon\R^2\rightarrow\Aut(\R^n)$, given by 
$\pi(s,t) =  \diag(\E^{\,\rho_1 s+\lambda_1 t}, \dotsc , \E^{\,\rho_{n-1} s + \lambda_{n-1} t})$, 
where $\lambda_{n-1} = -(\lambda_1+\dotsb+\lambda_{n-2})$. Then $H$ can be described as $\R^{n+1}$ with the group operation
\begin{multline*}
(u,v,x_1,\dotsc,x_{n-1})\cdot(u',v',x_1',\dotsc,x'_{n-1}) \\ = (u+u',\,v+v',\, x_1+\E^{\,\rho_1 u+\lambda_1 v}x'_1,\dotsc, 
                                                          x_{n-1}+\E^{\,\rho_{n-1} u+\lambda_{n-1} v}x'_{n-1}),
\end{multline*}
where $(\rho_1,\ldots,\rho_{n-1})$ and $(\lambda_1,\ldots,\lambda_{n-1})$ are different from $(0,\ldots,0)$. 
The Lie group $H$ is unimodular if and only if $\sum_{i=1}^{n-1}\rho_{i} = 0$.

 If $(\rho_1,\ldots,\rho_{n-1})$ is not a multiple of
  $(\lambda_1,\ldots,\lambda_{n-1})$ (in particular
  if $H$ is not unimodular) then $H$ is the group of matrices of the form 
   \begin{equation}  \label{H-rhoi-lambdai-2}
\left(
\begin{array}{cccc}
\E^{\,\rho_1 u+\lambda_1 v} &  \cdots & 0 & x_1\\
\noalign{\smallskip}
\vdots & \ddots & \vdots &  \vdots\\
\noalign{\smallskip}
0 & \cdots  & \E^{\,\rho_{n-1}u+\lambda_{n-1} v} & x_{n-1}\\
0 & \cdots  & 0              & 1
\end{array}
\right ).
  \end{equation}
In any case, since $(\lambda_1,\ldots,\lambda_{n-2})\not=(0,\dotsc,0)$, we can suppose
(reordering the vectors $v_1,\dotsc,v_{n-1}$ if necessary) that $\lambda_1\not=0$, and define
$\hat{u}_{0} = (1/\sqrt{\lambda_1^2+\rho_1^2})$ $(\lambda_1 u_{0}-\rho_1 v_{0})$, 
$\hat{v}_{0} = (1/\sqrt{\lambda_1^2+\rho_1^2})(\rho_1 u_{0}+\lambda_1 v_{0})$.
Then
\[
 [\hat{u}_0,v_i]=\sigma_i v_i, \quad [\hat{v}_0,v_i]=\mu_i v_i, \qquad 1\leq i \leq n-1,
\]
with
$\sigma_i= (\lambda_1\rho_i-\rho_1\lambda_{\,i})/\sqrt{\lambda_1^2+\rho_1^2}$, $\mu_i= (\rho_1\rho_i+\lambda_1\lambda_{\,i})/\sqrt{\lambda_1^2+\rho_1^2}$. We have $\sigma_1=0$, $\mu_1\not=0$, then $H$ can be described as the Lie group $\hat{H}^{n+1}(\sigma_2$, $\dotsc,\sigma_{n-1};\mu_1$, $\dotsc,\mu_{n-1})$ of matrices of the form
\begin{equation}    \label{H-sigmai-mui}
  \left(
   \begin{array}{ccccc}
\E^{\mu_1 v} & 0 & \cdots & 0 & x_1\\
\noalign{\smallskip}
0 & \E^{\sigma_2 u+\mu_2 v} & \cdots & 0 & x_2\\
\noalign{\smallskip}
\vdots & \vdots & \ddots & \vdots & \vdots\\
\noalign{\smallskip}
0& 0& \ldots & \E^{\sigma_{n-1} u+\mu_{n-1} v} & x_{n-1} \\
\noalign{\smallskip}
0 & 0 &  \cdots & 0 & 1
\end{array}
\right).
\end{equation}
In particular, if $H$ is unimodular, one gets that $\sigma_{n-1}=-\sum_{i=2}^{n-2}\sigma_i$,
$\mu_{n-1}=-\sum_{i=1}^{n-2}\mu_i$, and the family of such unimodular metric Lie groups depends on $2n-5$ parameters.

We consider the global coordinate system $(u,v,x_1,\ldots,x_{n-1})$ of the matrix Lie group $H^{n+1}(\rho_1,\dotsc,\rho_{n-1};\lambda_1,\dotsc,\lambda_{n-2})$. 
We put again $\lambda_{n-1}=-(\lambda_1$ $+\dotsb+\lambda_{n-2})$.
The generators of $\fh$ correspond to the left-invariant vector fields
$u_0= \partial/\partial u$, $v_0= \partial/\partial v$, $v_i=\E^{\,\rho_i u+\lambda_{\,i} v}\partial/\partial x_i$, $1\leq i\leq n-1$. 
The left-invariant Riemannian metric $g$ is given by
  \[
g = \D u^{2} + \D v^{2}+ \sum_{i=1}^{n-1}\E^{-2(\rho_i u+\lambda_{\,i} v)}\D x_i^2.
\]
In terms of the basis $\{u_0,v_0,v_1,\dotsc,v_{n-1}\}$ of $\fh$, the Levi-Civita connection $\na$ of $g$ is given by
$\nabla_{v_i}u_0=-\rho_i v_i$, $\nabla_{v_i}v_0=-\lambda_{\,i} v_i$,
$\nabla_{v_i}v_i=\rho_i u_0+\lambda_{\,i} v_0$, $1\leq i\leq n-1$, 
the other components being zero. The curvature tensor field satisfies
\begin{align*}
 &R_{u_{0}v_{i}}u_{0} = -\rho_{i}^{2}v_{i}, \quad R_{v_{0}v_{i}}v_{0} = -\lambda_{\,i}^2 v_{i},\quad R_{u_{0}v_{i}}v_{0} = R_{v_{0}v_{i}}u_{0} = -\lambda_{\,i}\rho_{i}v_{i},\\
\noalign{\smallskip}
 &R_{u_{0}v_{i}}v_{i} = \rho_{i}(\rho_{i}u_{0} + \lambda_{\,i}v_{0}), \quad R_{v_{0}v_{i}}v_{i} = \lambda_{\,i}(\rho_{i}u_{0} + \lambda_{\,i}v_{0}),\\
\noalign{\smallskip}
 &R_{v_{i}v_{j}}v_{i} = -(\lambda_{\,i}\lambda_{j} + \rho_{i}\rho_{j})v_{j}, \quad R_{v_{i}v_{j}}v_{j} = (\lambda_{\,i}\lambda_{j} + \rho_{i}\rho_{j})v_{i},
\end{align*}
for $1\leq i\not=j \leq n$, the other components being zero. Then the sectional curvatures of the basic sections are  given by
\begin{align*}
 &K(u_0,v_0)=0, \quad K(u_0,v_i)=-\rho_i^2,\quad K(v_0,v_i)=-\lambda_{\,i}^2,  &1\leq i\leq n-1,\\
\noalign{\smallskip}
 &K(v_i,v_j)=-(\rho_i\rho_j+\lambda_{\,i}\lambda_j), &1\leq i\not= j\leq n-1,
\end{align*}
and the nonvanishing components of the Ricci curvature by
$$
\begin{array}{l}
\Ric (u_{0},u_{0}) =  -\sum_{i=1}^{n-1}\rho_i^2, \quad  \Ric (v_0,v_{0})  = -\sum_{i=1}^{n-1}\lambda_{\,i}^2,\\[0.6pc]
\Ric (v_i,v_{i})  =  -\rho_i \sum_{j=1}^{n-1}\rho_j,   \quad \Ric (u_{0},v_{0}) = -\sum_{i=1}^{n-1}\lambda_{\,i}\rho_{i}.
\end{array}
$$
The scalar curvature is then
\[
      s=-\sum_{i=1}^{n-1}\lambda_{\,i}^2  - \sum_{i=1}^{n-1}\rho_i^2 -\Big(\sum_{i=1}^{n-1}\rho_i\Big)^2.
\]
If   $H^{n+1}(\rho_1,\dotsc,\rho_{n-1};\lambda_1,\dotsc,\lambda_{n-2})$  is unimodular, that is
$\sum_{i=1}^{n-1}\rho_i=0$, then the signature of the Ricci form is $(0,\stackrel{n-1}{\dots},0,0,-)$  or $(0,\stackrel{n-1}{\dots},0,-,-)$,
 depending on whether or not
 $(\lambda_1,\dotsc,\lambda_{n-2})$ and $(\rho_1,\dotsc,\rho_{n-2})$ are proportional, respectively.

 If $\rho_i=a\lambda_{\,i}$ for some $a\not=0$, and for each $i=1,\dotsc,n-2$,
 the metric Lie group $H^{n+1}(\rho_1,\dotsc,\rho_{n-1};$ $\lambda_1,\dotsc,\lambda_{n-2})$ is isometrically isomorphic
 to the direct product
 $G^{\,n}(\tilde{\lambda}_1,\dotsc,\tilde{\lambda}_{n-2},-\sum_{i=1}^{n-2}\tilde{\lambda}_{\,i})\times\R$, with
 $\tilde{\lambda}_{\,i}=\sqrt{1+a^2}\lambda_{\,i}$, $i=1,\dotsc,n-2$.
\end{example}

\section{Classification of connected, simply-connected cyclic metric Lie groups for dimensions $n\leq 5$}
\label{secsie}
\setcounter{equation}{0}

Because any homogeneous structure on a two-dimensional Riemannian manifold is of type ${\mathcal T}_{1}$ (see \cite[Theorem 3.1]{TriVan}), it follows that every  two-dimensional (nonabelian) metric Lie group is vectorial. Moreover, for the simply-connected case, Proposition \ref{prop32} implies that it is isometrically isomorphic to the Poincar\'e half-plane $H^{2}(c),$ for some $c\neq 0$.

Next, using the same process than in \cite[pp.\ 83-85]{TriVan}, we shall give the classification for dimension three. First, suppose that $(G,g)$ is a three-dimensional unimodular metric Lie group. Then there exists (cf.\ \cite{Mil}) an orthonormal basis $\{e_{1},e_{2},e_{3}\}$ of the Lie algebra $\fg$ of $G$ such that
\begin{equation}
\label{unia}
[e_{2},e_{3}]=\lambda_{1}e_{1},\quad [e_{3},e_{1}] = \lambda_{2}e_{2},\quad [e_{1},e_{2}] = \lambda_{3}e_{3},
\end{equation}
where $\lambda_{1}$, $\lambda_{2}$, $\lambda_{3}$ are constants. The Ricci tensor $\mathrm{Ric}$ is given by
\[
\mathrm{Ric} = 2\bigl(\mu_{2}\mu_{3}\,\theta^{1}\otimes \theta^{1} + \mu_{1}\mu_{3}\,\theta^{2}\otimes \theta^{2}
+ \mu_{1}\mu_{2}\,\theta^{3}\otimes \theta^{3}\bigr),
\]
where $\theta^{i}$, $i = 1,2,3$, are the $1$-forms dual to $e_{i}$, and $\mu_{i} = \frac{1}{2}(\lambda_{1} + \lambda_{2} + \lambda_{3}) -\lambda_{\,i}$, $i=1,2,3$. According with the signs of $\lambda_{\,i}$, $i=1,2,3$, we have six kinds of Lie algebras.

If $g$ is cyclic left-invariant, one gets from \eqref{unia} that $\lambda_{1} + \lambda_{2} + \lambda_{3} =0$. This implies that $\fg$
is isomorphic to either $\fsl(2,\R)$ with $\lambda_{3} = -(\lambda_{1}+\lambda_{2})$, $\lambda_{1},\lambda_{2}>0$, or to $\fe(1,1)$ with $\lambda_{1}=-\lambda_{3}>0$ and $\lambda_{2} =0$. In both cases, one gets $\mu_{i} = -\lambda_{\,i}$, $i=1,2,3$. Then the signature of $\mathrm{Ric}$ is $(-,-,+)$ for $\fsl(2,\R)$ and $(0,0,-)$ for $\fe(1,1)$. They give the simply-connected, traceless cyclic metric Lie groups $E(1,1)$ and the universal covering group $\widetilde{\SL(2,\R)}$ of $\SL(2,\R)$ described by Kowalski and Tricerri \cite[Theorem 2.1]{KowTri} in the cases (b) and (c), respectively.

The family of all cyclic left-invariant metrics on $\widetilde{\SL(2,\R)}$ can be explicitly determined as the inner products on $\mathfrak{so}(1,2)$,   under the natural identification with $\mathfrak{sl}(2,\R)$, making orthonormal the bases $\{e_{1},e_{2},e_{3}\}$ given by
\begin{equation}\label{basis}
\begin{array}{lcl}
e_{1} & = &  \sqrt{\lambda_{2}(\lambda_{1}+\lambda_{2})}(E_{12} + E_{21}),\\[0.4pc]
e_{2} & = & \sqrt{\lambda_{1}(\lambda_{1}+\lambda_{2})}(E_{13} + E_{31}),\\[0.4pc]
e_{3} & =  & \sqrt{\lambda_{1}\lambda_{2}}(E_{32}-E_{23}),
\end{array}
\end{equation}
for all $\lambda_{1},\lambda_{2} >0,$ where $E_{ij}$ denote the matrix on $\mathfrak{so}(1,2)$ with entry $1$ where the $i$ th row and the $j$ th column meet, all other entries being $0$.

Next, let $G$ be a nonunimodular Lie group. Because the unimodular kernel ${\fu}$ of $\fg$ is abelian, there exists an orthonormal basis $\{e_{1},e_{2},e_{3}\}$, with $\fu = \R\{e_{2},e_{3}\}$, such that
\[
[e_{1},e_{2}] = ae_{2} + be_{3},\quad [e_{1},e_{3}] = ce_{2} + de_{3},\quad [e_{2},e_{3}] = 0,
\]
where $a,b,c,d$ are real constants such that $a +d \neq 0$. This implies that $\fg$ is solvable, even for the unimodular case $a+d =0$. Moreover, if the metric $g$ on $G$ is cyclic left-invariant it follows that $\ad_{e_{1}}$ must be  selfadjoint on $\fu$. Then we can take a new orthonormal basis $\{u_{1} = e_{1},u_{2},u_{3}\}$ such that $u_{2}$ and $u_{3}$ are eigenvectors of $\ad_{e_{1}}$. Hence, the bracket operation is expressed as
$[u_{1},u_{2}] = \alpha u_{2}$, $[u_{1},u_{3}] = \beta u_{3}$, $ [u_{2},u_{3}] = 0$, where $\alpha$ and $\beta$ are the corresponding eigenvalues. Then $G$ must be isomorphic to the orthogonal semidirect product $\R\ltimes_{\pi}\R^{2}$ such that $\pi(t) = \E^{t\,\diag(\alpha, \beta)} = \diag\big( \E^{\alpha\, t}, \E^{\,\beta t}\big)$.

According to Example \ref{solvable-gen-1}, the group $ \R\ltimes_{\pi}\R^{2}$ admits a description as the matrix group
 $G^3(\alpha,\beta)$, and the  left-invariant metric making $\{u_{1},u_{2},u_{3}\}$ an orthonormal basis is that given in the statement of Theorem \ref{classi} (2) below.  

For $\alpha = \beta$, $G^3(\alpha,\beta)$  is the three-dimensional hyperbolic space with constant sectional curvature $-\alpha^{2}$, and for the unimodular case $\alpha = -\beta$, putting $u_{1}' = u_{1}$, $u_{2}' = (1/\sqrt{2})(u_{2}+u_{3})$, $u_{3}' = (1/\sqrt{2})(u_{2}-u_{3})$, one gets
\[
[u_{1}',u_{2}'] = \alpha u_{3}',\quad [u_{2}',u_{3}'] = 0,\quad [u_{3}',u_{1}'] = -\alpha u_{2}'.
\]
Hence, $G^3(\alpha,-\alpha)$  is isometrically isomorphic to $E(1,1)$ equipped with a left-invariant Riemannian metric with
principal Ricci curvatures $(0,0,-2\alpha^2)$  (see \eqref{Ricci-Galphai}). Then, we have the following result.

\begin{theorem}
\label{classi}
A three-dimensional connected, simply-connected  nonabelian cyclic metric Lie group   is isometrically isomorphic to one of the following Lie groups with a suitable left-invariant metric:

\noindent $(1)$ The group $\widetilde{SL(2,\R)}$ with the family of metrics depending on two positive parameters $\lambda_{1},\lambda_{2}$ making orthonormal the bases of Ricci eigenvectors described in \emph{(\ref{basis})} and with principal Ricci curvatures $(-2\lambda_2(\lambda_1+\lambda_2),-2\lambda_1(\lambda_1+\lambda_2)$, $2\lambda_1\lambda_2)$.

\smallskip

\noindent $(2)$ The  orthogonal semidirect product $\R\ltimes_{\pi}\R^{2}$, both factors with the additive group structure and where the action $\pi$ is 
$\pi(t) = \diag\big( \E^{\alpha\, t}, \E^{\,\beta t}\big)$, $(\alpha,\beta)\in \R^{2}\setminus \{(0,0)\}$. 
This Lie group can be described as the matrix group  $G^3(\alpha,\beta)$
with the left-invariant metric 
\[
g = \D u^{2} + \E^{-2\alpha u}\D x^{2} + \E^{-2\beta u}\D y^2,
\]
where $(u,x,y)$ corresponds to the global coordinate system 
$(u,x_1,x_2)$ in \emph{Example \ref{solvable-gen-1}}.

The case $\alpha = \beta$ corresponds to the hyperbolic space $H^{3}(\alpha)$ and the case $\alpha = -\beta$ to the group $E(1,1)$ of rigid motions of the Minkowski plane equipped with a one-parameter family of left-invariant metrics with signature of the Ricci form $(0,0,-)$. Moreover,  $G^3(\alpha,0)$  $($resp.\  $G^3(0,\beta)$$)$ is isometrically isomorphic to the direct product of the $2$-dimensional real hyperbolic space $H^{2}(\alpha)$ $($resp.\ $H^{2}(\beta)$$)$ and $\R$.
\end{theorem}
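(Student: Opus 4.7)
The plan is to treat the unimodular and nonunimodular cases separately, since in dimension three each admits a sharp normal form due to Milnor that makes the cyclic condition immediate to analyze. For the unimodular case, I would start from Milnor's canonical basis \eqref{unia}, in which the structure of $\fg$ is parameterized by $(\lambda_{1},\lambda_{2},\lambda_{3})\in\R^{3}$. Imposing $\Cyclic_{XYZ}\langle[X,Y],Z\rangle=0$ on the triple $(e_{1},e_{2},e_{3})$ collapses to the single scalar equation $\lambda_{1}+\lambda_{2}+\lambda_{3}=0$. Among Milnor's six sign patterns, Theorem~\ref{theo34} rules out $\fsu(2)$ (all $\lambda_{i}$ of the same sign), and the only remaining nonabelian possibilities compatible with the vanishing of the sum are $\fsl(2,\R)$ (three nonzero $\lambda_{i}$ with mixed signs) and $\fe(1,1)$ (one $\lambda_{i}=0$, the other two of opposite signs).

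The description (1) of the full cyclic family on $\widetilde{SL(2,\R)}$ is then obtained by realizing $\fsl(2,\R)\cong\fso(1,2)$ and checking by a direct bracket computation that the vectors in \eqref{basis} satisfy \eqref{unia} with $\lambda_{3}=-(\lambda_{1}+\lambda_{2})$; the principal Ricci curvatures follow from Milnor's formula with $\mu_{i}=-\lambda_{i}$. For the nonunimodular case, the unimodular kernel $\fu$ has codimension one and, in dimension three, is automatically abelian. Taking a unit $e_{1}\perp\fu$ and applying Proposition~\ref{pco}(iii) to an adapted orthonormal basis (equivalently, using the same argument that appears in the proof of Proposition~\ref{behav}(iii)) forces $\ad_{e_{1}}|_{\fu}$ to be selfadjoint. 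Diagonalizing in an orthonormal basis $\{u_{2},u_{3}\}$ of $\fu$ with eigenvalues $\alpha,\beta$ yields the model $\R\ltimes_{\pi}\R^{2}$ with $\pi(t)=\diag(\E^{\alpha t},\E^{\beta t})$; the matrix realization $G^{3}(\alpha,\beta)$ and the explicit metric $g=\D u^{2}+\E^{-2\alpha u}\D x^{2}+\E^{-2\beta u}\D y^{2}$ are then the $n=3$ instance of Example~\ref{solvable-gen-1}.

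Finally I would verify the identifications listed at the end of the statement. The case $\alpha=\beta$ is already identified with the hyperbolic space $H^{3}(\alpha)$ in Example~\ref{solvable-gen-1}, and the splitting $G^{3}(\alpha,0)\cong H^{2}(\alpha)\times\R$ is visible by inspection of the metric. The main conceptual point — and really the only step that requires care — is reconciling the two occurrences of $\fe(1,1)$: it appears in the unimodular branch, and it also appears as the subfamily $\alpha+\beta=0$ of the diagonalized nonunimodular model (allowed because in that branch the cyclic condition does not by itself force nonunimodularity). The orthogonal change of basis $u_{1}'=u_{1}$, $u_{2}'=(u_{2}+u_{3})/\sqrt{2}$, $u_{3}'=(u_{2}-u_{3})/\sqrt{2}$ carried out in the excerpt transforms the relations of $G^{3}(\alpha,-\alpha)$ into the Milnor form of $\fe(1,1)$, so the two branches yield isometrically isomorphic metric Lie groups, and the Ricci signature $(0,0,-)$ follows from \eqref{Ricci-Galphai}. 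This merging, together with the fact that the two branches together exhaust the nonabelian cyclic three-dimensional algebras, gives the stated classification.
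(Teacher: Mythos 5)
Your proposal is correct and follows essentially the same route as the paper's own proof: Milnor's unimodular normal form with the single cyclic equation $\lambda_{1}+\lambda_{2}+\lambda_{3}=0$ yielding only $\fsl(2,\R)$ and $\fe(1,1)$, then diagonalization of the selfadjoint operator $\ad_{e_{1}}$ on the abelian unimodular kernel in the remaining case, and the same orthogonal change of basis $u_{2}'=(u_{2}+u_{3})/\sqrt{2}$, $u_{3}'=(u_{2}-u_{3})/\sqrt{2}$ to merge the two occurrences of $E(1,1)$. The only cosmetic difference is your appeal to Theorem~\ref{theo34} to exclude $\fsu(2)$, which the sum condition already rules out by itself.
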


Note that for dimension three, according to \cite[Corollary  2.2]{KowTri}, a nonsymmetric  manifold admitting a nontrivial
structure of class $\T_3$ also admits a nontrivial structure of class $\T_2$.

The classification of connected, simply-connected cyclic metric Lie groups for dimension four was given by Kowalski and Tricerri \cite[Theorem  3.1]{KowTri} for the unimodular case. We add the corresponding nonunimodular Lie groups in the following theorem. 

\begin{theorem}
\label{four-nonunim}
A four-dimensional connected and simply-connected nonabelian cyclic metric Lie group $G$ is isometrically
isomorphic to one of the following metric Lie groups: \smallskip

\noindent $(1)$ The direct product $\widetilde{SL(2,\R)}\times \R$, where 
$\widetilde{SL(2,\R)}$ is equipped with any of the cyclic left-invariant metrics described in \emph{Theorem \ref{classi} (1)}. \smallskip

\noindent $(2)$ The matrix Lie  group  $G^4(\alpha,\beta,\gamma)$, $(\alpha,\beta,\gamma)\in\R^3\setminus\{(0,0,0)\}$, 
with the left-inva\-riant metric 
\[
g = \D u^{2} + \E^{-2\alpha u}\D x^{2} + \E^{-2\beta u}\D y^2 + \E^{-2\gamma u}\D z^2,
\]
where $(u,x,y,z)$ corresponds to the global coordinate system 
$(u,x_1,x_2,x_3)$ in \emph{Example \ref{solvable-gen-1}}. 

The Lie group  $G^4(\alpha,-\alpha,0)$  corresponds to the metric direct product 
$E(1,1)\times\R$, where $E(1,1)$ is  equipped with a left-invariant Riemannian metric
 with principal Ricci curvatures $(0,0,-2\alpha^2)$.

\smallskip

\noindent $(3)$ The orthogonal semidirect product   $\R^2\ltimes_{\pi}\R^{2}$ under $\pi\colon\R^2\rightarrow \Aut(\R^2)$ given by 
$\pi(s,t) = \diag(\E^{\,\rho s+\lambda t}, \E^{\sigma s-\lambda t})$, $\rho+\sigma\not=0$, $\lambda>0$. 
This Lie group can be described as the group $H^4(\rho,\sigma;\lambda)$ of matrices of the form 
\[
\left(
\begin{array}{ccc}
\E^{\,\rho u+\lambda v} & 0 & x \\
0 & \E^{\sigma u-\lambda v} & y \\
\noalign{\smallskip}
0 & 0 & 1
\end{array}
\right),  \qquad  \rho+\sigma\not=0, \quad \lambda>0,
  \]
  with the left-invariant metric
\[
g = \D u^{2} + \D v^{2}+ \E^{-2(\rho u+\lambda v)}\D x^{2} +
\E^{-2(\sigma u-\lambda v)}\D y^2. 
\]
\end{theorem}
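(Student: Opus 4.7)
The plan is to partition the possibilities for $\fg$ via the Levi decomposition and then, in the solvable branch, apply the orthogonal splitting of Proposition \ref{psplit} together with the three-dimensional classification Theorem \ref{classi}. Since there are no real simple Lie algebras of dimension $1$, $2$, or $4$, the Levi factor $\fs$ of $\fg$ is either zero or a three-dimensional simple ideal.

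If $\fs\neq 0$, the restricted inner product on $\fs$ is cyclic, so Theorem \ref{simple} forces $\fs\cong\fsl(2,\R)$ and the radical $\fr$ is one-dimensional. The adjoint action $\fs\to\mathrm{Der}(\fr)\cong\R$ is a Lie algebra homomorphism from the perfect algebra $\fs$ into an abelian algebra, hence trivial: $[\fs,\fr]=0$, so $\fg=\fs\oplus\fr$ as Lie algebras and $\fr\subset\mathcal{Z}(\fg)$. Since $\mathcal{Z}(\fg)\perp[\fg,\fg]=\fs$ for any cyclic metric (as used in the proof of Proposition \ref{nabnilp}), the decomposition is orthogonal, and Theorem \ref{classi} (1) describes the restricted cyclic metric on $\fsl(2,\R)$, yielding case $(1)$.

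In the solvable case, Proposition \ref{psplit} produces an orthogonal decomposition $\fg=\R W+_{\ad_W}\fn$ with $\fn$ a three-dimensional unimodular ideal, $\ad_W|_\fn$ selfadjoint, and the restricted inner product on $\fn$ again cyclic; Theorem \ref{classi} then restricts $\fn$ to $\R^3$ or $\fe(1,1)$ (the option $\fsl(2,\R)$ is incompatible with solvability). If $\fn\cong\R^3$, I diagonalize $\ad_W$ in an orthonormal eigenbasis with eigenvalues $\alpha,\beta,\gamma\in\R$ not all zero, and Example \ref{solvable-gen-1} identifies $(G,g)$ with $G^4(\alpha,\beta,\gamma)$; the identification $G^4(\alpha,-\alpha,0)\cong E(1,1)\times\R$ is provided by the three-dimensional change of basis used at the end of the proof of Theorem \ref{classi}. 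This is case $(2)$.

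When $\fn\cong\fe(1,1)$, I reproduce the derivation analysis of Example \ref{solvable-gen-2} with $n=3$: in the orthonormal basis $\{e_1,e_2,e_3\}$ of $\fn$ with $[e_3,e_i]=\alpha_i e_i$, $\alpha_1+\alpha_2=0$, selfadjointness of $\ad_W$ combined with the derivation identity forces $\ad_W$ to preserve $\R\{e_1,e_2\}$, to kill $e_3$, and to commute with $\ad_{e_3}$ there; a common orthonormal eigenbasis $\{v_1,v_2\}$ together with $u_0=W$, $v_0=e_3$ produces the bracket relations of $H^4(\rho_1,\rho_2;\lambda_1)$ with $\lambda_1>0$ after reorienting $v_1$. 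If $\rho_1+\rho_2\neq 0$ this is case $(3)$; if instead $\rho_1+\rho_2=0$, the planar rotation in the $(u_0,v_0)$-plane exhibited in Example \ref{solvable-gen-2} decouples an $\R$-factor and yields an isomorphism with $G^4(\gamma,-\gamma,0)\cong E(1,1)\times\R$ for a suitable $\gamma$, absorbing this unimodular subcase into case $(2)$. The main obstacle is exactly this derivation analysis for $\fe(1,1)$, together with the planar rotation needed to avoid double-listing; the remaining verifications are routine bookkeeping against the structural results of Sections \ref{sectre}--\ref{seccin}.
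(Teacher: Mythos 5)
Your proof is correct, but it follows a genuinely different route from the paper's. The paper splits into unimodular versus nonunimodular: the unimodular case is settled by citing Kowalski--Tricerri's classification, and in the nonunimodular case $\fg$ is decomposed along the unimodular kernel $\fu$, with the three possibilities $\fu\cong\fsl(2,\R)$, $\R^3$, $\fe(1,1)$ treated by hand (the $\fsl(2,\R)$ option is ruled out there because the derivation constraints force $\ad_{e_0}=0$, contradicting nonunimodularity). You instead split via the Levi decomposition: the nonsolvable branch is handled by Theorem \ref{simple} together with the orthogonality of $\mathcal{Z}(\fg)$ and $[\fg,\fg]$ (exactly the fact used in Proposition \ref{nabnilp}), which correctly yields the orthogonal product $\widetilde{SL(2,\R)}\times\R$; the solvable branch goes through Proposition \ref{psplit} and the three-dimensional classification of $\fn$, with the $\fe(1,1)$ subcase resolved by rerunning the derivation analysis of Example \ref{solvable-gen-2} for $n=3$. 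Your derivation computations there ($a_i^3=a_3^3=0$, $[W,e_3]=0$, commuting selfadjoint operators, simultaneous eigenbasis) match the paper's third case verbatim, and your disposal of the unimodular subcase $\rho_1+\rho_2=0$ via the planar rotation (equivalently, the identification $H^4(\rho,-\rho;\lambda)\cong E(1,1)\times\R$, which the paper records in the remark following the theorem) correctly avoids double-listing. What each approach buys: the paper's argument is shorter because the entire unimodular case is outsourced to \cite[Proposition 3.6]{KowTri}, whereas yours is self-contained, re-deriving the unimodular cases ($G^4$ with $\alpha+\beta+\gamma=0$, $E(1,1)\times\R$, and $\widetilde{SL(2,\R)}\times\R$) from the paper's own structural results; the price is that you must justify two small points you state somewhat quickly, namely that a four-dimensional Lie algebra has Levi factor of dimension $0$ or $3$ (no semisimple algebra of dimension $<3$ or $=4$ exists) and that every cyclic inner product on $\fe(1,1)$ admits the diagonal description $[u_1,u_2]=\alpha u_2$, $[u_1,u_3]=-\alpha u_3$ needed to launch the Example \ref{solvable-gen-2} analysis --- the latter is supplied by the basis change $u_2'=(1/\sqrt2)(u_2+u_3)$, $u_3'=(1/\sqrt2)(u_2-u_3)$ preceding Theorem \ref{classi}. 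Neither point is a gap, only bookkeeping.
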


\begin{proof}
From \cite[Proposition 3.6]{KowTri}, one has that a four-dimensional connected,
simply-connected,  unimodular, nonabelian  cyclic metric Lie group $G$ is  either
$\widetilde{SL(2,\R)}\times\R$ or $E(1,1)\times\R$ or 
 $G^4(\alpha,\beta,\gamma)$  for $(\alpha,\beta,\gamma)\in \R^{3}\setminus \{(0,0,0)\}$
with $\alpha+\beta+\gamma=0$.

Suppose now that the Lie algebra $\fg$ of $G$ is not unimodular, then its unimodular kernel~$\fu$ has dimension three and $\fg$   is  the semidirect sum  $\fu^\bot\oplus \fu$ under the adjoint representation,  orthogonal with respect to the inner product  $\langle\cdot,\cdot\rangle$ defining the left-invariant Riemannian metric $g$ on $G$. The unimodular ideal
 $\fu$ of $\fg$, with its induced inner product, must be isometrically isomorphic to either
 $\fsl(2,\R)$ or $\fe(1,1)$ or $\R^3$, and there exists an orthonormal basis  $\{e_{0},e_{1},e_{2},e_{3}\}$
 of $\fg$ such that $\fu^\bot=\R\{e_0\}$ and $\fu=\R\{e_1,e_2,e_3\}$ such that
 \begin{equation}
 \label{uni}
[e_{2},e_{3}]=\lambda_{1}e_{1},\quad [e_{3},e_{1}] = \lambda_{2}e_{2},\quad [e_{1},e_{2}] = \lambda_{3}e_{3},
\qquad \lambda_1+\lambda_2+\lambda_3=0.
\end{equation}
Since $g$ is cyclic left-invariant, $\ad_{e_0}$ is  selfadjoint on $\fu$, and then
\begin{equation} 
\label{ade-4}
\ad_{e_0} = \left(
\begin{array}{ccc}
\mu_1 & a_3 & a_2\\
a_3 & \mu_2 & a_1 \\
a_2 & a_1 & \mu_3
\end{array}
\right ),
\end{equation}
in terms of the basis $\{e_{1},e_{2},e_{3}\}$ of $\fu$. Since $\ad_{e_0}$ acts as a derivation on the Lie algebra
$\fu$, we get
\begin{equation} \label{lambdas}
  \lambda_{\,k}(\mu_i+\mu_j-\mu_k) = 0,   \quad (\lambda_{\,i}+\lambda_j)a_k=0,
\end{equation}
for each cyclic permutation $(i,j,k)$ of $(1,2,3)$.

{\it First case}. If $\fu=\fsl(2,\R)$ then $\lambda_1\lambda_2\lambda_3\not=0$,
and for all distinct $i,j,k$, we have $\lambda_{\,k}=-(\lambda_{\,i}+\lambda_j)$,
then $a_k=0$ and $\mu_i=\mu_j=\mu_k=0$. Hence $\ad_{e_0}=0$ and $\fg$
would be the direct sum $\R\oplus\fsl(2,\R)$, which is unimodular.

{\it Second case}. If $\fu$ is the abelian Lie algebra $\R^3$,
we can take an orthonormal basis of eigenvectors $\{u_1,u_2,u_3\}$
of the  selfadjoint operator $\ad_{e_0}$ acting on~$\fu$.
The structure equations are given, in terms of the orthonormal
basis $\{e_0,u_1,u_2,u_3\}$ of~$\fg$, by
\[
[e_{0},u_{1}] = \alpha u_{1},\quad
[e_{0},u_{2}] = \beta u_{2},\quad
[e_{0},u_{3}] = \gamma u_{3},
\]
with $\alpha+\beta+\gamma\not=0$, because $\fg$ is not unimodular.
 This gives (see Example~\ref{solvable-gen-1}) the nonunimodular version of (2)  in the statement.

{\it Third case}. If $\fu=\fe(1,1)$ we can suppose $\lambda_1=-\lambda_2>0$, $\lambda_3>0$ in~\eqref{uni}.
By~\eqref{lambdas}, we have $a_1=a_2=0$, $\mu_3=0$ and $\mu_1=\mu_2$ in~\eqref{ade-4},
and put $\lambda=\lambda_1=-\lambda_2$, $\mu=\mu_1=\mu_2$, $a=a_3$.
Then $[e_0,e_1]=\mu e_1+a e_2$, $[e_0,e_2]=ae_1+\mu e_2$, $[e_2,e_3]=\lambda e_1$, 
  $[e_1,e_3]=\lambda e_2$,  
and $\R\{e_0,e_3\}$ and $\R\{e_1,e_2\}$ are mutually orthogonal abelian subalgebras 
of~$\fg$. We now consider the orthonormal basis $\{u_1,u_2,u_3,u_4\}$ of $\fg$ given by $u_{1} = e_0$, 
$u_{2} =-e_3$, $u_{3} = (1/\sqrt 2)(e_{1}+e_{2})$, $u_{4} = (1/\sqrt{2})(e_{1}-e_{2})$. 
Then
\begin{equation} \label{brackets-ui-4}
  [u_1,u_3]=\rho u_3, \quad [u_1,u_4]=\sigma u_4, \quad
  [u_2,u_3]=\lambda u_3, \quad [u_2,u_4]=-\lambda u_4,
\end{equation}
where we have put $\rho=\mu+a$, $\sigma=\mu-a$, so that, since $\fg$ is not
unimodular, $\rho+\sigma=2\mu\not=0$. Thus, $\fg$ is the direct sum of its
abelian subalgebra $\R\{u_1,u_2\}$ and its abelian ideal $\R\{u_3,u_4\}$.
Hence,  the Lie group $G$ is isomorphic to the orthogonal semidirect product
$\R^2  \ltimes_\pi \R^2$, where the action $\pi$ is 
$\pi(s,t) = \diag(\E^{\,\rho s+\lambda t}, \E^{\sigma s-\lambda t})$,  
that is, $G$ can be considered as $\R^4$ with the group operation
\[
  (u,v,x,y)\cdot(u',v',x',y')=(u+u',\;v+v',\;x+\E^{\,\rho u+\lambda v}x',\;y+\E^{\sigma u-\lambda v}y'),
\]
and it admits a  description  as the matrix group  $H^4(\rho,\sigma;\lambda)$.
With respect to the global coordinate system $(u,v,x,y)$ of  $H^4(\rho,\sigma;\lambda)$,
the generators $u_i$ of $\fg$ correspond to the left-invariant vector fields 
$u_{1} = \partial/\partial u$, $u_{2} = \partial/\partial v$, 
$u_{3} = \E^{\,\rho u+\lambda v} \partial/\partial x$, $u_{4} = \E^{\sigma u-\lambda v}\partial/\partial y$.  
\end{proof}

\begin{remark} \rm One can prove that, if $\rho\sigma=\lambda^2$,
then  $H^4(\rho,\sigma;\lambda)$ is isometrically isomorphic to the metric direct product Lie group 
$H^2(\alpha)\times H^2(\beta)$, with $\alpha= \sqrt{\lambda^2+\rho^2}$ and
$\beta= \lambda(\rho+\sigma)/\sqrt{\lambda^2+\rho^2}$ and that $H^4(\rho,-\rho;\lambda)$ 
is isometrically isomorphic to the direct product $E(1,1)\times \R$, where $E(1,1)$ is equipped 
with a left-invariant Riemannian metric with principal Ricci curvatures $\big(0,0,-2(\lambda^{2} + \rho^{2})\big)$.  

In some cases, the  nonunimodular  Lie group  $G^4(\alpha,\beta,\gamma)$  is
a decomposable  metric Lie group. In fact,  $G^4(\alpha,\beta,0)$  is isometrically
isomorphic to the direct product  $G^3(\alpha,\beta)\times\R$, and
 $G^4(\alpha,\alpha,0)$  and  $G^4(\alpha,0,0)$  are isometrically isomorphic to the direct
products $H^3(\alpha)\times\R$ and $H^2(\alpha)\times\R^2$, respectively.
On the other hand, the indecomposable metric Lie group
 $G^4(\alpha,\beta,\gamma)$  for $\alpha=\beta=\gamma$
is the four-dimensional hyperbolic space $H^4(\alpha)$.
\end{remark}

The classification of unimodular Lie algebras of dimension five corresponding 
to traceless cyclic metric Lie groups was given by Bieszk \cite{Bie}.
We add the corresponding nonunimodular Lie groups in the next theorem. 

\begin{theorem}
\label{theofive}
A five-dimensional connected and simply-connected nonabelian cyclic metric Lie group $G$  is isometrically
isomorphic to one of the following metric Lie groups: \smallskip

\noindent $(1)$ The metric direct products $\widetilde{SL(2,\R)} \times \R^2$ and 
$\widetilde{SL(2,\R)}\times H^2(\alpha)$, where $\widetilde{SL(2,\R)}$ is equipped with any 
of the cyclic left-invariant metrics described in \emph{Theorem 6.1 (1)}.\smallskip

\noindent $(2)$ The Lie group  $G^5(\alpha_1,\alpha_{\,2},\alpha_3,\alpha_{\,4})$, for
     $(\alpha_1,\alpha_{\,2},\alpha_3,\alpha_{\,4})\in  \R^{4}$, with some $\alpha_{\,i}\not=0$. \smallskip

\noindent $(3)$ The orthogonal semidirect product   $\R^2\ltimes_{\pi}\R^{3}$ under $\pi\colon\R^2\rightarrow\Aut(\R^3)$
given by $\pi(s,t) = \diag( \E^{\,\rho s +\lambda t}, \E^{\,\sigma s + \mu t}, \E^{\,\tau s - (\lambda+\mu)t})$. 
This Lie group can be described as the group  $H^5(\rho,\sigma,\tau;\lambda,\mu)$ of matrices of the form 
\begin{equation}
\label{rstlm}
\left(
\begin{array}{cccc}
\E^{\,\rho u+\lambda v} &  0 & 0 & x \\
\noalign{\smallskip}
0 & \E^{\,\sigma u+\mu v} & 0 & y \\
\noalign{\smallskip}
0 & 0  & \E^{\,\tau u - (\lambda + \mu) v} & z \\
0 & 0  & 0              & 1
\end{array}
\right ),
\end{equation} 
with the left-invariant metric
\[ 
g = \D u^{2} + \D v^{2}+ \E^{-2(\rho u +\lambda v)}\D x^{2}+\E^{-2(\sigma u+\mu v)}\D y^{2} + \E^{2((\lambda+\mu) v-\tau u)}\D z^2 .
\]
In particular, if $\rho+\sigma\not=0$ and $\lambda\not=0$,  $H^5(\rho,\sigma,0;\lambda,-\lambda)$  is
isometrically isomorphic to the direct product   $H^4(\rho,\sigma;\lambda)\times\R$.
\end{theorem}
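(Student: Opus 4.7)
The plan is to split the proof according to whether $G$ is unimodular. For unimodular $G$, the classification of $5$-dimensional unimodular traceless cyclic metric Lie algebras is due to Bieszk~\cite{Bie}; passing to the connected, simply-connected matrix-group realization of each model one obtains $\widetilde{SL(2,\R)}\times\R^{2}$ in part~(1), together with the traceless subfamilies of $G^{5}$ in~(2) (those with $\sum\alpha_{\,i}=0$) and of $H^{5}$ in~(3) (those with $\rho+\sigma+\tau=0$).

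For nonunimodular $G$ I would use the orthogonal decomposition established in the proof of Proposition~\ref{psplit}: one can write $\fg = \R W +_{\ad_W}\fu$ orthogonally, where $\fu$ is the $4$-dimensional unimodular kernel and $\ad_W$ is a selfadjoint derivation of $\fu$. Since the restriction of a cyclic inner product to an ideal is again cyclic, Theorem~\ref{four-nonunim}, together with the trivial abelian case, identifies $\fu$ as one of three types: the abelian $\R^{4}$; $\fsl(2,\R)\oplus\R$; or the Lie algebra of $G^{4}(\alpha,\beta,\gamma)$ with $\alpha+\beta+\gamma=0$ (which already subsumes $\fe(1,1)\oplus\R$ as the case $\gamma=0,\,\alpha+\beta=0$).

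I would then examine each subcase. If $\fu=\R^{4}$, the selfadjoint $\ad_W$ diagonalizes, producing the nonunimodular instance of $G^{5}(\alpha_{1},\alpha_{\,2},\alpha_{3},\alpha_{\,4})$ of~(2). If $\fu=\fsl(2,\R)\oplus\R$, then, by the orthogonality of the centre and the derived algebra under any cyclic inner product (as exploited in the proof of Proposition~\ref{nabnilp}), the operator $\ad_W$ is block diagonal; on the $\fsl(2,\R)$-block it becomes a selfadjoint inner derivation, and the computation carried out in the first case of the proof of Theorem~\ref{four-nonunim} forces this block to vanish, while nonunimodularity turns the $\R$-block into a nonzero dilation, so that $\fg\cong\fsl(2,\R)\oplus\fh^{2}(\alpha)$ and $G\cong\widetilde{SL(2,\R)}\times H^{2}(\alpha)$, which is item~(1). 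If $\fu$ is of type $G^{4}(\alpha,\beta,\gamma)$, I would mimic the derivation-plus-selfadjointness analysis of Example~\ref{solvable-gen-2}: imposing the derivation identities $\ad_W[e_{\,i},e_{\,j}]=[\ad_W e_{\,i},e_{\,j}]+[e_{\,i},\ad_W e_{\,j}]$ on a symmetric matrix representing $\ad_W$ should force it to annihilate the distinguished vector $e_{4}$ and to commute with $\ad_{e_{4}}$ on the abelian ideal $\R\{e_{1},e_{2},e_{3}\}$; a joint diagonalization, followed by the same orthogonal change of basis performed in Example~\ref{solvable-gen-2} to reach~\eqref{H-sigmai-mui}, reproduces the matrix description~(\ref{rstlm}) of $H^{5}(\rho,\sigma,\tau;\lambda,\mu)$ in~(3).

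The main technical obstacle is the last subcase: the derivation equations on a symmetric $4\times 4$ matrix have to be organized carefully according to how many of $\alpha,\beta,\gamma$ coincide or vanish in order to secure both $\ad_W(e_{4})=0$ and the commutation with $\ad_{e_{4}}$; once this is in hand, the concrete matrix and metric descriptions, as well as the final isometric identification $H^{5}(\rho,\sigma,0;\lambda,-\lambda)\cong H^{4}(\rho,\sigma;\lambda)\times\R$ stated at the end of the theorem, follow by direct computation together with the product-splitting argument already used in the remark after Theorem~\ref{four-nonunim}.
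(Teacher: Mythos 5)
Your proposal is correct and follows essentially the same route as the paper's proof: the nonunimodular case is reduced via the orthogonal splitting $\fg=\R W+_{\ad_W}\fu$ to the same three possibilities for the four-dimensional unimodular kernel ($\R^{4}$, $\fsl(2,\R)\oplus\R$, and $\fg(\alpha,\beta,\gamma)$ with $\alpha+\beta+\gamma=0$, subsuming $\fe(1,1)\oplus\R$), handled exactly as in the paper --- diagonalization of the selfadjoint $\ad_W$ on $\R^{4}$, vanishing of its $\fsl(2,\R)$-block with a nonzero dilation on the $\R$-factor, and the derivation-plus-selfadjointness analysis of Example~\ref{solvable-gen-2} --- while the unimodular case is read off from Bieszk's classification. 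Your anticipated ``technical obstacle'' in the third subcase is in fact already dispatched uniformly by the computation in Example~\ref{solvable-gen-2}, which needs only that some eigenvalue of $\ad_{e_{4}}$ be nonzero, so no case distinction on coincidences among $\alpha,\beta,\gamma$ is required.
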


\begin{proof}
Let $g$ be the cyclic left-invariant metric on $G$, and let $\langle\cdot,\cdot\rangle$ be
the corresponding inner product  on the
 Lie algebra $\fg$ of $G$.
First, suppose that $\fg$ is not unimodular and let $\fu$ be its unimodular kernel. Then $\fg$ is the semidirect
sum $\fu^\bot \oplus \fu$ under the adjoint representation, orthogonal with respect to $\langle\cdot,\cdot\rangle$. With its induced inner product, $\fu$
 must be isometrically isomorphic to either the direct sum $\fsl(2,\R)\oplus\R$ or $\R^4$
 or the Lie algebra $\fg(\alpha,\beta,\gamma)$, with $\alpha+\beta+\gamma=0$.
 In each case, there exists a suitable orthonormal  basis  $\{e_{0},e_{1},e_{2},e_{3},e_{4}\}$
 of $\fg$ such that $\fu^\bot=\R\{e_0\}$ and $\fu=\R\{e_1,e_2,e_3,e_4\}$.
 Since $g$ is cyclic left-invariant, $\ad_{e_0}$ is  selfadjoint on $\fu$, and
 in terms of the basis $\{e_{1},e_{2},e_{3},e_{4}\}$ of $\fu$ we can write
\begin{equation}  \label{matriz}
\ad_{e_0} = \left(
\begin{array}{cccc}
\mu_1 & a_3 & a_2 & a_4\\
a_3 & \mu_2 & a_1 & a_5\\
a_2 & a_1 & \mu_3 & a_6\\
a_4 & a_5 & a_6   & \mu_4
\end{array}
\right ).
\end{equation}

We have to consider three cases. 

 \emph{First case}. If $\fu=\fsl(2,\R)\oplus\R$ we can suppose that
  \begin{equation*}
[e_{2},e_{3}]=\lambda_{1}e_{1},\quad [e_{3},e_{1}] = \lambda_{2}e_{2},\quad [e_{1},e_{2}] = \lambda_{3}e_{3},
\quad [e_i,e_4]=0, \qquad i=1,2,3,
\end{equation*}
with all $\lambda_{\,i}$ not zero and
$\lambda_1+\lambda_2+\lambda_3=0$.
Since $\ad_{e_0}$ acts as a derivation on
$\fu$, we get
$\lambda_{\,k}(\mu_i+\mu_j-\mu_k) = 0$, $(\lambda_{\,i}+\lambda_j)a_k=0$,
$\lambda_1 a_4= \lambda_2 a_5= \lambda_3 a_6= 0$, 
for each cyclic permutation $(i,j,k)$ of $(1,2,3)$. Then $\mu_1=\mu_2=\mu_3=0$ and
$a_i=0$ in~\eqref{matriz} for each $i=1,\ldots,6$. If $\mu_4=0$ in~\eqref{matriz} then $\fg$
should be be the unimodular Lie algebra $\fsl(2,\R)\oplus\R^2$.
 Then $\mu_4\not=0$ and $\fg$ must be the orthogonal direct sum of the Lie algebra
$\fsl(2,\R)$ and the Lie algebra $\R\{e_0,e_4\}$ with the structure equation
$[e_0,e_4]=\mu_4 e_4$, and hence we have $(1)$ in the statement,
with $\alpha=\mu_4$.

\emph{Second case}. If $\fu=\R^4$, we can take a basis formed by eigenvectors
$u_1, u_2,u_3$, $u_4$ of the selfadjoint operator
$\ad_{e_0}$ acting on $\fu$. In terms of the orthonormal basis
$\{e_0,u_1,u_2$, $u_3,u_4\}$ of~$\fg$, the structure equations of $\fg$ are given by 
\[
[e_0,u_i]=\alpha_{\,i} u_i, \qquad i=1,\dotsc,4, 
\]
with $\alpha_1+\alpha_{\,2}+\alpha_3+\alpha_{\,4}\not=0$  because $\fg$ is not unimodular.
In accordance with Example~\ref{solvable-gen-1}, we have the metric Lie groups in $(2)$ in the statement with the condition
$\sum_{i=1}^4\alpha_{\,i}\not=0$.

\emph{Third case}. If $\fu=\fg(\alpha,\beta,\gamma)$, with $(\alpha,\beta,\gamma)\not=(0,0,0)$, $\alpha+\beta+\gamma=0$,
we can suppose that
\[
  [e_4,e_1]=\alpha e_1,\quad  [e_4,e_2]=\beta e_2,\quad [e_4,e_3]=\gamma e_3.
\]
Then, according to the discussion in Example~\ref{solvable-gen-2}, the 
nonunimodular Lie group $G$ can be described as the group of matrices of the
form \eqref{rstlm}, 
 with $\rho+\sigma+\tau\not=0$ and $(\lambda,\mu)\not=(0,0)$. 

Finally, from the classification of unimodular Lie algebras of dimension five with a cyclic 
left-invariant metric, given by Bieszk \cite{Bie}, we  also have the metric direct product 
Lie group $\widetilde{\SL(2,\R)} \times \R^2$ in (1) in the statement, and the metric Lie group  
$G^5(\alpha_1,\alpha_{\,2},\alpha_3,\alpha_{\,4})$  in~$(2)$ with
 $\sum_{i=1}^4\alpha_{\,i}=0$ (which includes the cases~(c) and~(e) in~\cite[Theorem  1]{Bie}).
  The  metric Lie group  $H^5(\rho,\sigma,\tau;\lambda,\mu)$ in~$(3)$ with
 $\rho+\sigma+\tau=0$  corresponds to the case~(d)  in~\cite[Theorem~1]{Bie};
in fact, it can be described (as a particular case of the description~\eqref{H-sigmai-mui} 
of the matrix Lie group in Example~\ref{solvable-gen-2}) as the group
  $\hat{H}^5(\sigma,-\sigma,\mu_1,\mu_2,-\mu_1-\mu_2)$  of matrices of the form
 \[
  \left(
\begin{array}{cccc}
\E^{\mu_1 v} & 0 & 0 & x\\
0 & \E^{\sigma u+\mu_2 v} & 0 & y \\
0 & 0 & \E^{-\sigma u-(\mu_1+\mu_2)v} & z\\
0 &  0  &  0&  1
\end{array}
\right ),
\]
with $\mu_1,\sigma\not=0$, which gives the Lie group in case~(d)  in~\cite[Theorem  1]{Bie}.
\end{proof}

\begin{remark}
Many decomposable nonunimodular metric Lie groups appear as particular cases of the metric Lie groups
 $H^5(\rho,\sigma,\tau;\lambda,\mu)$.
 For example, if $\rho+\sigma\not=0$, $\tau=0$, $\lambda\not=0$, and $\mu=-\lambda$, we have the direct  product  $H^4(\rho,\sigma;\lambda)\times\R$.

Moreover, one can prove that $H^2(c)\times G^3(\alpha,\beta)$  is isometrically isomorphic to $H^5(\rho,\sigma,\tau;\lambda,\mu)$.
In particular, if $\alpha=\beta$ this metric Lie group corresponds to $H^2(c)\times H^3(\alpha)$.
If $\beta=0$ we have  $H^5(\rho,\sigma,0;\lambda,-\lambda)$, which is isometrically isomorphic to $H^2(c)\times H^2(\alpha)\times\R$.
If $\alpha=-\beta$, we have the metric Lie group  $H^5(c,0,0;0,\!-\alpha)$, which corresponds to
the direct product $H^2(c)\times E(1,1)$,
where $E(1,1)$ is  equipped with  a left-invariant Riemannian metric
 with principal Ricci curvatures
$(0,0,-2\alpha^2)$.
\end{remark}

\smallskip

\begin{remark} 
As for dimensions greater than or equal to six, we recall some facts.

Let $(M,g)$ be an $n$-dimensional connected Riemannian manifold $(M,g)$ admitting a structure $S \in \T_1 \oplus \T_2$.
Defining the vector field $\xi$ on $M$ by $\xi = (1/(n-1))\sum_{i=1}^n S_{e_i}e_i$, for any local orthonormal basis $\{e_i\}$,
then $S$ can be written as $S_XY = g(X,Y)\xi - g(Y,\xi)X + \pi(X,Y)$, where $\pi$ is a tensor field of type $(1,2)$.
The one-form $\omega$ metrically dual to the vector field $\xi$ is called the fundamental form of $S$.

Pastore and Verroca \cite{PasVer} studied
proper structures $S$ of type $\T_1 \oplus \T_2$ (that is, $S$ belongs neither to $\T_1$ nor to $\T_2$),
having closed fundamental $1$-form $\omega$. They proved (among other results) that a connected, simply-connected Riemannian manifold
which is a warped product and admits a nontrivial such structure $S$, is isometric to the real hyperbolic
space $H^n$ of constant curvature $-{\Vert\xi\Vert}^2$ and $n\geq 6$. However, the description of $H^n$ as a Riemannian homogeneous space corresponding to the proper structure $S\in \T_1 \oplus \T_2$ does not correspond to that of a cyclic metric Lie group, since the corresponding structure on the latter is necessarily vectorial. 
\end{remark}

{\footnotesize

\noindent {\bf Authors's addresses:}

\smallskip

\noindent (P.M.G.) Instituto de F\'\i sica Fundamental, CSIC, Serrano 113-bis, 28006-Ma\-drid, Spain. {\it E-mail:} pmgadea@iff.csic.es         
          
\smallskip

\noindent (J.C.G-D.) Department of Fundamental Mathematics, University of La Laguna, 38200-La Laguna, Tenerife, Spain. {\it E-mail:} jcgonza@ull.es
          
\smallskip

\noindent (J.A.O.) Departamento de Xeometr\'{\i}a e Topolox\'\i a, Facultade de Matem\'a\-ticas, Universidade de Santiago de Compos\-tela,
    15782-Santiago de Compostela, Spain. {\it E-mail:} ja.oubina@usc.es

}

\end{document}